\newtheorem{theorem}{Theorem}[section]
\newtheorem{lemma}[theorem]{Lemma}
\newtheorem{corollary}[theorem]{Corollary}
\newtheorem{definition}[theorem]{Definition}
\newtheorem{example}[theorem]{Example}
\theoremstyle{remark}
\newtheorem{remark}[theorem]{Remark}
\newcommand{\N}{\mathbb N}
\newcommand{\Z}{\mathbb Z}
\newcommand{\R}{\mathbb R}
\newcommand{\un}{\underline}
\def\l{\lambda_1}
\def\d{\delta}
\begin{document}

%\begin{frontmatter}
\title[]{Necessary and sufficient conditions for periodic decaying resolvents in linear discrete convolution Volterra equations and applications to ARCH$(\infty)$ processes}
%\tnotetext[tfn1]{The first author was partially funded by the Science Foundation Ireland grant 07/MI/008
%``Edgeworth Centre for Financial Mathematics". The second author was supported by the Irish Research Council for Science, Engineering and Technology under the Embark Initiative grant.}
%\tnotetext[tfn2]{}

\author{John A.~D.~Appleby}
\address{Edgeworth Centre for Financial Mathematics, School of Mathematical Sciences,  \\
Dublin City University, Ireland}
\email{john.appleby@dcu.ie}
\urladdr{http://webpages.dcu.ie/\textasciitilde applebyj}

\author{John A. Daniels}
\address{Edgeworth Centre for Financial Mathematics, School of Mathematical Sciences,  \\
Dublin City University, Ireland}
\email{john.daniels2@mail.dcu.ie}

\thanks{The first author was partially funded by the Science Foundation Ireland grant 07/MI/008
``Edgeworth Centre for Financial Mathematics". The second author was supported by the Irish Research Council for Science, Engineering and Technology under the Embark Initiative grant.}

\subjclass{ 39A06, 39A11, 39A23, 39A30, 39A50, 39A60, 60G10, 62M10, 62P20.}
%37H10, 34F05, 
%\end{keyword}

\keywords{ %convolution, 
Volterra equation, difference equation, admissibility, 
%subexponential sequence, 
periodic solution, auto--covariance function, ARCH($\infty$) process.
%long memory.
}

\begin{abstract}
We define a class of functions which have a known decay rate coupled with a periodic fluctuation. We identify conditions on the kernel of a linear summation convolution Volterra equation which give the equivalence of the kernel lying in this class of functions and the solution lying in this class of functions. Some specific examples are examined. In particular this theory is used to provide a counter--example to a result regarding the rate of decay of the auto--covariance function of an ARCH($\infty$) process.
%We consider a linear summation convolution Volterra equation where the kernel has a known decay rate coupled with a periodic fluctuation. We demonstrate that this asymptotic structure found in the kernel is also present in the solution of the Volterra  equation. This result is achieved via negating the effect of the periodic component, ie. rewriting the Volterra equation as a system of equations and applying known results concerning a certain class of weight function. Some specific examples are looked at. In particular this theory is used to provide a counter--example to a claim in the published literature regarding the rate of decay of the auto--covariance function of an ARCH($\infty$) process.
\end{abstract}
%\end{frontmatter}

\maketitle

%%%%%%%%%%%%%%%%%%%%%%%%%%%%%%%%%%%%%%%%%%%%%%%%%%%%%%%%%%%%%%%%%%%%%%%%%%%%
\section{Introduction}
This paper characterises the exact decay rate of the solution of the discrete linear Volterra equation
\begin{equation}\label{equ:convol}
	X(n+1) = f(n+1) + \sum_{j=0}^{n}U(n-j)X(j), \quad n\in\Z^+, \quad 	X(0) = X_0,
\end{equation}
where $f:\Z^+\rightarrow\R^d$, $U:\Z^+\rightarrow\R^{d\times d}$ and $X_0\in\R^d$. The exact rate of decay of the forcing function, $f$, is known and the kernel $U$ has known decay and periodic asymptotic behaviour. 
We define the associated resolvent equation of \eqref{equ:convol}
\begin{equation}\label{equ:resres}
	Z(n+1) = \sum_{j=0}^{n}U(n-j)Z(j), \quad n\in\Z^+, \quad Z(0) = I,
\end{equation}
where $Z:\Z^+\rightarrow\R^{d\times d}$ and $I$ is the identity matrix. By first examining \eqref{equ:resres} we can more easily analyse \eqref{equ:convol} via a variation of constants representation:
\begin{equation}\label{equ:resrep}
	X(n) = Z(n)X(0) + \sum_{j=1}^{n}Z(n-j)f(j), \quad n\in\{1,2,...\}.
\end{equation}
	It is shown in \cite{jaigdr:2006} that when the kernel of \eqref{equ:convol} has a particular rate of slower than exponential decay (e.g., polynomial or regularly varying decay), then the solution of \eqref{equ:resres} also has this exact rate of decay. It is from this class of weight function that the rate of decay of $U$ in the present work is imposed.
	It is shown in \cite{songbaker:2003}, \cite{songbaker:2004} and \cite{MR2722631} that periodicity in the kernel of perturbed summation Volterra equations implies periodicity in the solution of these equations. The stability of solutions of perturbed summation Volterra equations is also shown. Linear Volterra convolution and non-convolution equations are studied in \cite{sesm:1998}, where conditions on the summability of the resolvent and stability of the solution are used to establish the existence of a unique bounded (in particular periodic and almost periodic) solution. Conditions guaranteeing  the existence of asymptotically periodic solutions of linear non-convolution summation Volterra equations are derived in \cite{MR2722631} via an application of admissibility theory.
	
	Section \ref{sect:prelim} gives some fundamental definitions as well as various lemmata needed in the proof in Section~\ref{sect:res}.
	 In Section \ref{sect:res} the main result establishes that the solution of \eqref{equ:resres} also decays at the same rate as the kernel and the periodic component is preserved. This result is achieved by eliminating the effect of the periodicity, by evaluating \eqref{equ:resres} at $N$ discrete time points, where $N$ is the value of the period, and lifting the equation to a higher space dimension in which it is asymptotically autonomous. Then	by a careful separation of the summation term we can form a system of equations to which we apply the admissibility theory of~\cite{jaigdr:2006}. Moreover, it can be shown in the case when the kernel is ``small'' in some $\ell^1(\mathbb{Z}^+)$ sense, that $Z$ has periodic 
	 decaying asymptotic behaviour if and only if $U$ does, and indeed both sequences can be majorised by the same weight  function and possess the same period. In forthcoming work, it is planned to investigate more general forms of decay in both continuous and discrete equations, where the decay can be separated into a rate and a bounded component with some structure (such as the periodicity studied here).
%	We lastly include some examples of kernels with a particular structure, and examine its effect on the solution.
Lastly, in Section~\ref{sect:ex} the results developed in Section~\ref{sect:res} are applied to demonstrate that if a periodic fluctuation is present in the kernel of an  ARCH($\infty$) processes then this periodic component propagates through to the auto-covariance function of the  ARCH($\infty$) process. This example sheds further light on extant research on the memory properties of ARCH($\infty$) processes (see e.g.,
\cite{lgpkrl:2000, pkrl:2000, Zaffaroni:2004}).

\section{Preliminary Results}\label{sect:prelim}
%4)	spectral radius \rho(A) \leq RowNorm(A) < 1
%5) define convolution
If $d$ is a positive integer, the space of all $d\times d$ real matrices is denoted by
$\R^{d\times d}$, the zero matrix by $0$ and the identity matrix by
$I$. A matrix $A=(A_{ij})$ in $\R^{d\times d}$ is \emph{non-negative} if $A_{ij}\geq 0$,
in which case we write $A\geq 0$. A partial ordering is defined on
$\R^{d\times d}$ by letting $A\leq B$ if and only if $B-A\geq 0$. Of
course $A\leq B$ and $C\geq 0$ implies that $CA\leq CB$ and $AC\leq
BC$. The \emph{absolute value} of $A=(A_{ij})$ in $\R^{d\times d}$
is the matrix given by $(|A|)_{ij}=|A_{ij}|$. $\R^{d\times d}$ can
be endowed with many norms, but they are all equivalent. The
\emph{spectral radius} of a matrix $A$ is given by
$\rho(A)=\lim_{n\to\infty}\|A^n\|^{1/n}$, where $\|\cdot\|$ is any
norm on $\R^{d\times d}$; $\rho(A)$ is independent of the norm
employed to calculate it. We note that $\rho(A)\leq
\rho(|A|)$. Also if $0\leq A\leq B$, then $\rho(A)\leq \rho(B)$. Also,
\begin{equation}\label{equ:linalg}
	\rho(A)\leq\ \|A^k\|^{1/k}, \forall k\in\N
\end{equation}
We use, in this paper, the matrix norm $\|A\|_{\infty} = \max\limits_{1\leq i\leq N}\sum_{j=1}^{N}|A_{i,j}|$.

The set of integers is denoted by $\Z$, and $\Z^{+}=\{n\in\Z:n\geq 0\}$.
Sequences $\{u(n)\}_{n\geq 0}$ in $\R^d$ or $\{U(n)\}_{n\geq 0}$ in $\R^{d\times d}$ are sometimes identified
with functions $u:\Z^{+}\to\R^d$ and $U:\Z^{+}\to\R^{d\times d}$. If $\{U(n)\}_{n\geq0}$ and $\{V(n)\}_{n\geq 0}$
are sequences in $\R^{d\times d}$, we define the \emph{convolution}
of $\{(U\ast V)(n)\}_{n\geq0}$ by $(U\ast V)(n)=\sum_{j=0}^n U(n-j)V(j)$ for $n\geq 0$. Moreover using this definition of convolution one may recursively define the \emph{j-fold convolution}, $\{(U^{*j})(n)\}_{j\geq2, n\geq0}$, by 
$(U^{*2})(n) = (U*U)(n)$ and $(U^{*j})(n) = (U^{*(j-1)}*U)(n)$ for $j\geq3$ and $n\geq0$.
The \emph{$Z$-transform} of a sequence $\{U(n)\}_{n\geq0}$ is the sequence in $\R^{d\times d}$ is
defined by $\tilde{U}(z)= \sum_{j=0}^\infty U(j) z^{-j}$,	provided $z$ is a complex number for which the series converges absolutely. A similar definition pertains for sequences with values in other spaces.

Let $C\in\R^{d\times d}$, then we say that $C$ is a circulant matrix if $C_{i,j} = C_{d+i-j+1,1}$ for $i<j$ and $C_{i-j+1,1}$ for $i\geq j$. Such a matrix is a special type of Toeplitz matrix. We introduce a class of weight functions  used throughout this paper, it is defined and studied in \cite{jaigdr:2006}, we state it here for completeness.

%1) definition subexponential distribution
\begin{definition}  \label{def:gensubexp}
Let $r>0$. A real-valued sequence
$\gamma=\{\gamma(n)\}_{n\geq0}$ is in $\mathcal{W}(r)$ if
$\gamma(n)>0$ for all $n\geq0$, and
\begin{gather}  \label{eq:p2}
\lim_{n\to\infty} \frac{\gamma(n-1)}{\gamma(n)}=\frac{1}{r},
\quad \tilde{\gamma}(r)=\sum_{i=0}^\infty  \gamma(i)r^{-i}<\infty,\\
\lim_{m\to\infty} \biggl(\limsup_{n\to\infty}\frac{1}{\gamma(n)}\sum_{i=m}^{n-m} \gamma(n-i)\gamma(i)\biggr)=0.
\label{eq:p1}\end{gather}
\end{definition}

Observe that if $r<1$ and $\gamma\in \mathcal{W}(r)$, then $\gamma$ decays; whereas if $r>1$, $\gamma$ diverges.
Criteria for showing that a sequence $\{\gamma(n)\}_{n\geq0}$ is in
$\mathcal{W}(r)$ are given in \cite{jaigdr:2006}. Here we
simply note that $\gamma(n)=r^{n}n^{-\alpha}$ for $\alpha>1$;
$\gamma(n)=r^{n}n^{-\alpha}\exp(-n^{\beta})$ for
$\alpha\in\mathbb{R}$, $0<\beta<1$; and $\gamma(n)=r^{n} e^{-n/(\log
n)}$ all define sequences in $\mathcal{W}(r)$. The sequences defined
by $\gamma(n) =r^{n}$, and $\gamma(n)=r^{n}n^{-\alpha}$, $\alpha\leq 1$ are \emph{not} in $\mathcal{W}(r)$.
%If $\{\gamma(n)\}_{n\geq0}$ is a sequence in $\mathcal{W}(r)$ and $\{u(n)\}_{n\geq0}$ is a sequence in $\R^{d_1\times d_2}$, then
%\[
%	L_{\gamma}u := \lim_{n\to\infty}\frac{u(n)}{\gamma(n)},	\quad \text{ if this limit exists.}
%\]

In this paper, we investigate a class of kernels which have the essential rate of decay of a sequence in $\mathcal{W}(r)$, but exhibit a periodic ``fluctuation'' of period $N\in\mathbb{N}$ around this rate of decay. To encapsulate this idea we give the following definition.
\begin{definition}
	Let $d,N\in\Z^{+}/\{0\}$ and $r>0$ be finite. A sequence $U=\{U(n)\}_{n\geq0}\in\R^{d\times d}$ is in $\mathcal{WP}(r,N)$ if there exists a function $\phi\in\mathcal{W}(r)$ and a sequence of $d \times d$ matrices $\{A_i\}_{i=0}^{N-1}$ such that
	$\lim_{n\to\infty} U(Nn+i)/\phi(Nn)=A_i$. 	We refer to $\phi$ as a \text{\rm weight function} for $U$.
\end{definition}
If we wish to investigate the rate of decay of a function relative to a particular weight function, say $\gamma$, then it is desirable to know how $\gamma(Nn)$ relates to $\gamma(n)$.
%2)	lemma \phi(n) \in W(r) \implies \phi(Nn) \in W(r^N)
\begin{lemma}\label{lemma:wgt}
		Let $N$ be a positive integer and $r>0$.
		If $\phi\in\mathcal{W}(r)$ then $\Phi\in\mathcal{W}(\tau)$, where $\Phi(n):=\phi(Nn)$ and $\tau:=r^N$
\end{lemma}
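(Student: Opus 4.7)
The plan is to verify each of the three defining properties of $\mathcal{W}(\tau)$ for $\Phi(n)=\phi(Nn)$ in turn, reducing each to a corresponding property already known to hold for $\phi\in\mathcal{W}(r)$. Positivity of $\Phi$ is inherited from positivity of $\phi$, so the work is entirely in checking the ratio limit, the summability of the $\tau$-transform, and the subexponential convolution bound.

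For the ratio, I would write the quotient as a telescoping product,
\[
\frac{\Phi(n-1)}{\Phi(n)} \;=\; \frac{\phi(Nn-N)}{\phi(Nn)} \;=\; \prod_{k=1}^{N} \frac{\phi(Nn-k)}{\phi(Nn-k+1)},
\]
and note that each of the $N$ factors tends to $1/r$ as $n\to\infty$ by the hypothesis $\phi\in\mathcal{W}(r)$. Since $N$ is a fixed positive integer, the product tends to $r^{-N}=1/\tau$, as required. For the summability, observe that
\[
\tilde{\Phi}(\tau)\;=\;\sum_{n=0}^{\infty}\Phi(n)\tau^{-n}\;=\;\sum_{n=0}^{\infty}\phi(Nn)r^{-Nn}
\]
is a subseries of the non-negative, convergent series $\tilde{\phi}(r)=\sum_{k=0}^{\infty}\phi(k)r^{-k}$, hence convergent.

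The main obstacle is the subexponential property \eqref{eq:p1} for $\Phi$. I would handle it by the following dominating-subsum trick. Fix $m\in\mathbb{N}$ and $n\geq 2m$; reindex the sum by $j=Ni$, so the indices $j$ appearing are exactly the multiples of $N$ in the interval $[Nm, N(n-m)]$. Since all terms are non-negative, the sum is dominated by the full range of $j$:
\[
\frac{1}{\Phi(n)}\sum_{i=m}^{n-m}\Phi(n-i)\Phi(i)
\;\leq\;
\frac{1}{\phi(Nn)}\sum_{j=Nm}^{Nn-Nm}\phi(Nn-j)\phi(j).
\]
Now set $n' = Nn$ and $m' = Nm$; as $n\to\infty$ the variable $n'$ tends to infinity through the sub-sequence $N\mathbb{N}$, and as $m\to\infty$ so does $m'$. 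Taking $\limsup_{n\to\infty}$ (which is at most the $\limsup$ over all $n'\to\infty$) and then $\lim_{m\to\infty}$, the right-hand side vanishes by \eqref{eq:p1} applied to $\phi$. This gives \eqref{eq:p1} for $\Phi$ and completes the verification that $\Phi\in\mathcal{W}(\tau)$.

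The only subtlety worth flagging is the last step: one must use that the subexponential property of $\phi$, which is stated as a double limit over \emph{all} integers $n'\to\infty$, automatically implies the same double limit along any subsequence — in particular along $n'=Nn$ — so that no extra work is needed to match the lattice structure introduced by the change of scale. Everything else is bookkeeping.
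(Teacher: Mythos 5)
Your proof is correct and follows essentially the same route as the paper: positivity and the transform condition are inherited directly, the ratio limit is obtained by iterating (telescoping) the ratio condition for $\phi$ over $N$ steps, and the convolution condition is verified by dominating the lattice subsum by the full sum from $Nm$ to $Nn-Nm$ and using that limsups along the subsequence $n'=Nn$ (and the limit along $m'=Nm$) are controlled by the corresponding limits for $\phi$.
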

\begin{proof}
Note that $\Phi(n)=\phi(Nn)>0$. We establish \eqref{eq:p2} and \eqref{eq:p1} for $\Phi$. Since 
	$\Phi(n-1)/\Phi(n) = \phi(Nn-N)/\phi(Nn)$ 
		%= \frac{\phi(Nn-N)}{\phi(Nn-(N-1))}\frac{\phi(Nn-(N-1))}{\phi(Nn-(N-2))}\cdots\frac{\phi(Nn-1)}{\phi(Nn)}.
	and $\phi$ obeys \eqref{eq:p2}, we get	$\lim_{n\to\infty} \Phi(n-1)/\Phi(n) = 1/r^N = 1/\tau$.
%		\xrightarrow[n\to\infty]{} \frac{1}{r^N} = \frac{1}{\tau}.
%	\end{gather*}
	Also
	\begin{gather*}
		\widetilde{\Phi}(\tau) = \sum_{i=0}^{\infty}\Phi(i)\tau^{-i} = \sum_{i=0}^{\infty}\phi(Ni)r^{-Ni} \leq \sum_{i=0}^{\infty}\phi(i)r^{-i} < \infty .
	\end{gather*}
	Turning to \eqref{eq:p1}, by construction we have
	\begin{gather*}
		\sum_{i=m}^{n-m}\frac{\Phi(n-i)\Phi(i)}{\Phi(n)} = \sum_{i=m}^{n-m}\frac{\phi(Nn-Ni)\phi(Ni)}{\phi(Nn)}
		\leq \sum_{i=Nm}^{Nn-Nm}\frac{\phi(Nn-i)\phi(i)}{\phi(Nn)}.
	\end{gather*}
	Therefore
	\begin{align*}
		\limsup_{n\to\infty}\sum_{i=m}^{n-m}\frac{\Phi(n-i)\Phi(i)}{\Phi(n)}
		&\leq \limsup_{n\to\infty}\sum_{i=Nm}^{Nn-Nm}\frac{\phi(Nn-i)\phi(i)}{\phi(Nn)}
		\leq \limsup_{L\to\infty}\sum_{i=Nm}^{L-Nm}\frac{\phi(L-i)\phi(i)}{\phi(L)}.
	\end{align*}
	The last inequality is obtained by letting $L=Nn$ and noting that in the limit the sum to $L-Nm$ will contain more terms than $Nn-Nm$. Finally, as $\phi\in\mathcal{W}(r)$
	\begin{align*}
		\limsup_{m\to\infty}\limsup_{n\to\infty}\sum_{i=m}^{n-m}\frac{\Phi(n-i)\Phi(i)}{\Phi(n)}
		&\leq \limsup_{m\to\infty}\limsup_{L\to\infty}\sum_{i=Nm}^{L-Nm}\frac{\phi(L-i)\phi(i)}{\phi(L)} \\
		&\leq \limsup_{P\to\infty}\limsup_{L\to\infty}\sum_{i=P}^{L-P}\frac{\phi(L-i)\phi(i)}{\phi(L)} = 0,
	\end{align*}
	with the last inequality holding by reasoning similar to above.
\end{proof}

%3)	Thm: Convolution Limit in jaigdr:2006
In determining the  results in Section \ref{sect:res} we have used \cite[Thm.3.2]{jaigdr:2006} which we state here for completeness. Note in this result and the rest of the paper that if $\gamma$ is a positive real sequence, $f\in\mathbb{R}^{d_1\times d_2}$, and
$\lim_{n\to\infty} f(n)/\gamma(n)$ exists we denote the limit by $L_\gamma f$. The theorem provides an explicit formula for $L_{\gamma}z$ in terms of the data.
\begin{theorem}\label{thm:3}
Let $f:\Z^+\to \R^d$  and $F:\Z^+\to \R^{d\times d}$ and suppose $\{z(n)\}_{n\geq0}$ obeys
\begin{equation}\label{equ:1a}
	z(n+1) = f(n) + \sum_{i=0}^{n}F(n-i)z(i), \quad n\geq0, \quad	z(0)= z_0 \in \R^{d}.
\end{equation}
Suppose that there is
a $\gamma$ in $\mathcal{W}(r)$ such that $L_{\gamma}f$ and $L_{\gamma}F$ both exist, and that
\begin{equation}\label{eq:thm3a}
    \rho\bigl(r^{-1}\widetilde{|F|}(r)\bigr)=\rho\biggl(\sum_{i=0}^{\infty}r^{-(i+1)}|F(i)|\biggr)<1.
\end{equation}
Then the solution $z$ of \eqref{equ:1a} satisfies
\begin{equation}\label{eq:thm3b}
    L_{\gamma}z=(rI- \tilde{F}(r))^{-1}[L_{\gamma}f+(L_{\gamma}F) \tilde{z}(r)],
\end{equation}
where
%\begin{equation}\label{eq:ztilde}
  $\tilde{z}(r)=(rI- \tilde{F}(r))^{-1}[rz_0+\tilde{f}(r)]$.
%\end{equation}
\end{theorem}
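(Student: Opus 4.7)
The plan is to combine an absolutely-convergent $Z$-transform manipulation (delivering the algebraic formula for $\tilde z(r)$) with a three-region splitting of the convolution (identifying the asymptotic constant $L_\gamma z$). First, I would show $\sum_{n\ge 0}|z(n)|r^{-n}<\infty$, which legitimises everything that follows. Since $L_\gamma f$ and $L_\gamma F$ exist and $\tilde\gamma(r)<\infty$, the quantities $\widetilde{|f|}(r)$ and $\widetilde{|F|}(r)$ are finite. Passing to absolute values in \eqref{equ:1a}, truncating at $n\le N$, multiplying by $r^{-(n+1)}$ and summing yields a matrix inequality on $\sum_{n\le N}|z(n)|r^{-n}$ involving $r^{-1}\widetilde{|F|}(r)$; the hypothesis makes $(I-r^{-1}\widetilde{|F|}(r))^{-1}=\sum_{k\ge 0}(r^{-1}\widetilde{|F|}(r))^k$ a finite non-negative matrix, producing a uniform-in-$N$ bound and hence absolute convergence at $r$. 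Taking $Z$-transforms of \eqref{equ:1a} at $r$ is then justified and gives $(rI-\tilde F(r))\tilde z(r)=rz_0+\tilde f(r)$; since $\rho(r^{-1}\tilde F(r))\le\rho(r^{-1}\widetilde{|F|}(r))<1$, the matrix $rI-\tilde F(r)$ is invertible, producing the stated formula for $\tilde z(r)$.

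Next I would upgrade the crude bound $|z(n)|=O(r^n)$ to the sharper $|z(n)|\le M\gamma(n)$. With $M_\gamma(n):=\sup_{0\le k\le n}|z(k)|/\gamma(k)$, the convolution inequality for $M_\gamma(n+1)$, combined with $|F|\le C\gamma$ and property \eqref{eq:p1} of $\mathcal{W}(r)$, closes into a Gronwall-type bound (again ultimately controlled by the spectral hypothesis) giving a uniform $M$. With boundedness secured, divide \eqref{equ:1a} by $\gamma(n+1)$ and split
\[
\frac{1}{\gamma(n+1)}\sum_{i=0}^{n}F(n-i)z(i)=S_1(n,m)+S_2(n,m)+S_3(n,m),
\]
with index ranges $0\le i\le m-1$, $m\le i\le n-m$, and $n-m+1\le i\le n$. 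In $S_1$, the asymptotics $\gamma(n-i)/\gamma(n+1)\to r^{-(i+1)}$ and $F(n-i)/\gamma(n-i)\to L_\gamma F$ for each fixed $i$ yield $\lim_n S_1(n,m)=r^{-1}(L_\gamma F)\sum_{i=0}^{m-1}z(i)r^{-i}$, which tends to $r^{-1}(L_\gamma F)\tilde z(r)$ as $m\to\infty$. In $S_3$, reindexing $k=n-i$ and using $\gamma(n-k)/\gamma(n+1)\to r^{-(k+1)}$ relates $\limsup_n S_3$ and $\liminf_n S_3$ to $\limsup_n z(n)/\gamma(n)$ and $\liminf_n z(n)/\gamma(n)$ through the coefficient $r^{-1}\tilde F(r)$, with the $m$-tail controlled by summability of $\sum|F(k)|r^{-k}$ and by the boundedness of $|z|/\gamma$. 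The central block $S_2$ is dominated by $(CM/\gamma(n+1))\sum_{i=m}^{n-m}\gamma(n-i)\gamma(i)$, which vanishes as $m\to\infty$ uniformly in $n$ by \eqref{eq:p1}.

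The main obstacle is closing the loop in this last phase: $S_3$ contains $z(n-k)/\gamma(n-k)$ for fixed $k$, precisely the quantity whose limit we seek. Writing $L^{+}:=\limsup_n z(n)/\gamma(n)$ and $L^{-}:=\liminf_n z(n)/\gamma(n)$ (interpreted coordinate-wise, with positive/negative parts of $F$ treated separately so that limsups pass through each term cleanly) and passing to $\limsup$ and $\liminf$ in $n$, then to $m\to\infty$, in the divided equation yields a matrix relation of the form $(rI-\tilde F(r))(L^{+}-L^{-})=0$. Invertibility of $rI-\tilde F(r)$, again from the spectral-radius hypothesis, forces $L^{+}=L^{-}$, and the same relation, now an equality, pins the common value down to \eqref{eq:thm3b}. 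The spectral condition thus enters in each of the three phases: as a majorant making $\tilde z(r)$ finite, as a Gronwall closure ensuring boundedness of $|z|/\gamma$, and as the linear-algebraic invertibility that forces uniqueness of the limit.
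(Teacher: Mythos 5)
Note first that the paper you are comparing against contains no proof of this statement: Theorem~\ref{thm:3} is quoted verbatim from \cite{jaigdr:2006} (``which we state here for completeness''), so the only comparison possible is with the method of that reference, which your outline does resemble: weighted summability at $r$, a $Z$-transform identity for $\tilde z(r)$, boundedness of $z/\gamma$, and a three-block splitting of the convolution exploiting \eqref{eq:p2} and \eqref{eq:p1}. Your first phase is sound: the truncated absolute-value inequality together with non-negativity of $r^{-1}\widetilde{|F|}(r)$ and $\rho(r^{-1}\widetilde{|F|}(r))<1$ gives the uniform-in-$N$ bound, the shift identity yields $(rI-\tilde F(r))\tilde z(r)=rz_0+\tilde f(r)$, and invertibility follows from $\rho(\tilde F(r))\le\rho(\widetilde{|F|}(r))$.

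There are, however, two steps where the mechanism you describe would fail as stated. First, the boundedness of $|z(n)|/\gamma(n)$ does not follow from the naive Gronwall closure you sketch: using only $|F|\le C\gamma$ and the $\mathcal{W}(r)$ properties, the coefficient multiplying $M_\gamma(n)$ in your recursion is of the order of $r^{-1}\bigl(C\tilde\gamma(r)+\widetilde{|F|}(r)\bigr)$, which is in no way forced to be a contraction; one must split the convolution into a ``recent past'' block, whose weighted coefficients are controlled by $\sum_k r^{-(k+1)}|F(k)|$, a middle block handled by \eqref{eq:p1}, and an initial block, and then exploit $\rho\bigl(r^{-1}\widetilde{|F|}(r)\bigr)<1$ via an adapted norm or matrix powers (since $\rho<1$ does not give $\|\cdot\|_\infty<1$). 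Second, and more seriously, passing $\limsup$/$\liminf$ through the divided equation cannot produce the exact identity $(rI-\tilde F(r))(L^{+}-L^{-})=0$: $\limsup$ is only subadditive, and splitting $F$ into positive and negative parts yields one-sided inequalities, not an equality, so invertibility of $rI-\tilde F(r)$ is not the tool that closes the argument. What the splitting actually delivers is the componentwise oscillation bound $0\le L^{+}-L^{-}\le r^{-1}\widetilde{|F|}(r)\,(L^{+}-L^{-})$, and $L^{+}=L^{-}$ then follows from non-negativity together with $\rho\bigl(r^{-1}\widetilde{|F|}(r)\bigr)<1$ (iterate $v\le Av$ to get $v\le A^k v\to 0$). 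Once the common finite limit $c$ exists, your three-block computation does give $(rI-\tilde F(r))c=L_\gamma f+(L_\gamma F)\tilde z(r)$, i.e.\ \eqref{eq:thm3b}; so the architecture is right, but those two steps need the indicated repairs before the argument is complete.
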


%6) structure of (I-B)^{-1}
We provide a preliminary lemma which demonstrates that the inverse of a lower triangular block Toeplitz matrix is also a lower triangular block Toeplitz matrix.
\begin{lemma}\label{lemma:matinv}
	 Let $B_{2,1}, B_{3,1},...,B_{N,1}$ be $d\times d$ matrices. Let $B$ be a matrix in $\R^{Nd\times Nd}$ with $N,d\in\Z^+$ such that $B$ has the following block structure, for $i,j=\{1,...,N\}$,
	\[
 		B_{i,j} =
 		\begin{cases}
   		0_d, & \text{if } i \leq j, \\
   		B_{i-j+1,1}, & \text{if } i>j,
  	\end{cases}
	\]
	where $0_{d}$ represents the $d\times d$ zero matrix.	
	Then $(I-B)^{-1}$ exists and setting $C:=(I-B)^{-1}$ we have  	
	\begin{equation}\label{equ:b1}
		C_{i,j} =
		\begin{cases}
			0_d, &\text{if } i<j, \\
			I_d, &\text{if } i=j, \\
			C_{i-1,j-1}, &\text{if } i>j>1.
		\end{cases}
	\end{equation}
	and
	\begin{equation}\label{equ:b2}
		C_{t,1} = \sum_{l=1}^{t-1}B_{l+1,1}C_{t-l,1}
		= \sum_{l=1}^{t-1}C_{t-l,1}B_{l+1,1} \quad \text{for } t\geq2.
	\end{equation}
\end{lemma}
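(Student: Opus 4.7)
My plan is to exploit the algebraic structure of block lower triangular Toeplitz matrices. Invertibility will come from nilpotency of $B$, and the prescribed form of $C$ will drop out of the fact that the class of block lower triangular Toeplitz matrices is closed under matrix multiplication. The recurrence in \eqref{equ:b2} will then be read off directly from $(I-B)C=I$ and $C(I-B)=I$.

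First I would observe that, because $B_{i,j}=0_d$ for $i\le j$, a routine induction on $k$ shows $(B^k)_{i,j}=0_d$ whenever $i-j<k$, so in particular $B^N=0$. Hence the Neumann series terminates: $I-B$ is invertible and $C=\sum_{k=0}^{N-1}B^k$.

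Second, I would introduce the set $\mathcal{T}$ of matrices $M\in\R^{Nd\times Nd}$ which are block lower triangular and Toeplitz, i.e., $M_{i,j}=0_d$ for $i<j$ and $M_{i,j}=M_{i-j+1,1}$ for $i\ge j$ (allowing an arbitrary diagonal block $M_{1,1}$). A short direct computation shows that if $M,M'\in\mathcal{T}$ then for $i\ge j$ one has $(MM')_{i,j}=\sum_{k=j}^{i}M_{i-k+1,1}M'_{k-j+1,1}$, which visibly depends only on $i-j$, so $MM'\in\mathcal{T}$. Since $I\in\mathcal{T}$ and $B\in\mathcal{T}$, every power $B^k$ lies in $\mathcal{T}$, and hence so does $C$. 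This delivers the Toeplitz/triangularity part of \eqref{equ:b1}; the identity $C_{i,i}=I_d$ follows because the diagonal blocks of $B^k$ vanish for $k\ge 1$ by the first step, so only $B^0=I$ contributes to the diagonal of $C$.

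Finally, I would extract \eqref{equ:b2} by expanding $(I-B)C=I$ and $C(I-B)=I$ block by block in the first column. For $t\ge 2$ the $(t,1)$ block of $(I-B)C=I$ reads
\[
C_{t,1}=\sum_{k=1}^{t-1}B_{t,k}C_{k,1}=\sum_{l=1}^{t-1}B_{l+1,1}C_{t-l,1},
\]
where the second equality uses the substitution $l=t-k$ together with the Toeplitz identity $B_{t,k}=B_{t-k+1,1}$; the analogous expansion of $C(I-B)=I$ in its $(t,1)$ block gives the mirror identity with $C$ and $B$ swapped. There is no substantive obstacle in this proof; the only step requiring slight care is the index bookkeeping in verifying closure of $\mathcal{T}$ under multiplication, which is precisely where the Toeplitz structure of $C$ originates.
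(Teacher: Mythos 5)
Your proof is correct, but it reaches the conclusion by a genuinely different route from the paper. The paper gets invertibility from the fact that $I-B$ is block unit lower triangular (determinant $1$), then derives the block relations $C_{i,j}=\sum_{l=j}^{i-1}B_{i,l}C_{l,j}=\sum_{l=j+1}^{i}C_{i,l}B_{l,j}$ from $(I-B)C=I$ and $C(I-B)=I$, and establishes the Toeplitz property $C_{i,j}=C_{i-j+1,1}$ by an induction on $i-j$ using those relations; \eqref{equ:b2} then follows by specialising to the first block column. You instead obtain invertibility from the nilpotency $B^N=0$, giving the explicit terminating Neumann series $C=\sum_{k=0}^{N-1}B^k$, and you get the triangular--Toeplitz structure of $C$ structurally, by checking that block lower triangular block Toeplitz matrices form an algebra closed under multiplication, so every power $B^k$ (and hence $C$) inherits the structure; the diagonal blocks $C_{i,i}=I_d$ come from the vanishing diagonals of $B^k$, $k\geq1$. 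The final extraction of \eqref{equ:b2} from the $(t,1)$ blocks of $(I-B)C=I$ and $C(I-B)=I$ is essentially the same in both arguments, and your index manipulations there are sound. Your approach buys an explicit closed form for $C$ and a reusable structural fact (closure of the block triangular Toeplitz class), at the cost of the small auxiliary closure computation; the paper's induction is more pedestrian but entirely self-contained, working directly with the defining equations without invoking nilpotency or the Neumann series.
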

\begin{proof}
	Note that $I-B$ has ones on its main diagonal (ie. $\det(I-B)=1\not=0$) and hence is invertible.
	The lower triangular structure of $C$ is determined by considering the $i,j^{th}$ element of $(I-B)C$ and using an induction argument.
%	We firstly show that $I-B$ is indeed a block lower triangular matrix. For $1<j$,
%	\[
%		0_d = [(I-B)C]_{1,j} = B_{1,1}C_{1,j} + \sum_{k=i+1}^{N}\underbrace{B_{i,k}}_{0}C_{k,j}.
%	\]
%	Hence, $C_{1,j} =0_d$, for $1< j$. Assuming $C_{i^*,j}=0$ for $i^*< j$, where $i^*\in\{1,...N-2\}$ is fixed, then we have for $i^*+1<j$,
%	\[
%		0_d = [(I-B)C]_{i^*+1,j} = \sum_{k=1}^{i^*}B_{i^*+1,k}\underbrace{C_{k,j}}_{0} + \underbrace{B_{i^*+1,i^*+1}}_{I_d} C_{i^*+1,j}	+ \sum_{k=i^*+2}^{N}\underbrace{B_{i^*+2,k}}_{0}C_{k,j}.
%	\]
%	Hence, $C_{i^*+1,j}=0_d$. Using the above relation it is observed that if $i^*+1$ is replaced by $j$, then one attains $C_{j,j}=I_d$.
	We start  by establishing the relation
	\begin{equation}\label{eq:matinv}
		C_{i,j} = \sum_{l=j}^{i-1}B_{i,l}C_{l,j} = \sum_{l=j+1}^{i}C_{i,l}B_{l,j}, \quad \text{ for } i>j.
	\end{equation}
First, we	observe that
	\[
		0_d = [C(I-B)]_{i,j} = \sum_{l=j}^{i}C_{i,l}(I-B)_{l,j} = C_{i,j} - \sum_{l=j+1}^{i}C_{i,l}B_{l,j}
	\]
	By similarly considering $[(I-B)C]_{i,j}$, one establishes \eqref{eq:matinv}.
	We use induction to establish the third equality of \eqref{equ:b1}, which is equivalent to
	\begin{equation}\label{eq:toep}
		C_{i,j} = C_{i-j+1,1}, \quad \text{ for } i>j.
	\end{equation}
	We first prove $C_{j+1,j} = C_{2,1}$. From \eqref{eq:matinv}
	\[
		C_{j+1,j} = \sum_{l=j}^{j}B_{j+1,l}C_{l,j} = B_{j+1,j}C_{j,j} = B_{2,1}C_{1,1} = \sum_{l=1}^{2-1}B_{2,l}C_{l,1} = C_{2,1}
	\]
	Now, assume $C_{p,q} = C_{p-q+1,1}$ for all $0\leq p-q<i-j$ and $p,q\in\{1,...,N\}$ and $i,j$ are fixed.
	\begin{align*}
		C_{i,j} &= \sum_{l=j}^{i-1}B_{i,l}C_{l,j} =\sum_{l=1}^{i-j}B_{i,l+j-1}C_{l+j-1,j} \\
		&= \sum_{l=1}^{i-j+1-1}B_{i-j+1,l}C_{l+j-1,j} 
		= \sum_{l=1}^{i-j+1-1}B_{i-j+1,l}C_{l,1} = C_{i-j+1,1}.
	\end{align*}
	Thus one has $C_{i,j} = C_{i-j+1,1}$ for all $i>j$.	With \eqref{eq:matinv} and \eqref{eq:toep} established, we can conclude \eqref{equ:b2}.
\end{proof}

We supply a Lemma which will be used in the proof of the main result, Theorem \ref{thm:res}.
\begin{lemma}\label{lm:spec}
	Let $\{U(n)\}_{n\in \Z^{+}}$ be a sequence in $\R^{d \times d}$. Suppose
	\begin{equation}\label{eq:hyp}
	\max_{1\leq p\leq d} \left(\sum_{q=1}^{d} \sum_{i=0}^{N-1}\sum_{l=0}^{\infty}r^{-N(l+1)}|U(Nl+i)|_{p,q} \right) < 1 ,  \quad r\leq1.
	\end{equation}
	Define, for some $N\in\{1,2,\dots\}$, the matrix function $F:\Z^+ \to \R^{N\times N}$ by $F(n) = (I-B)^{-1}J(n)$ for $n\geq 1$,
	where the $d \times d$ block composition of $B$ and $J$, for $i,j\in\{1,..N\}$, is given by
	\begin{equation}\label{eq:iminusb}
 		[I-B]_{i,j} =
 		\begin{cases}
   		0_d, & \text{ if } i < j, \\
   		I_d, & \text{ if } i = j, \\
   		-U(i-j-1), & \text{ if } i>j,
  	\end{cases}
  	\qquad 
		[J(n)]_{i,j} =
		\begin{cases}
			U(Nn+ N+i-j-1), & \text{ if } i\leq j, \\
			U(N(n+1) + i-j-1) & \text{ if } i> j.
		\end{cases}
	\end{equation}
	Then
	\begin{equation}\label{eq:rownorm}
		\left\|\sum_{i=0}^{\infty}r^{-N(i+1)}|F(i)| \, \right\|_{\infty} < 1.
	\end{equation}
\end{lemma}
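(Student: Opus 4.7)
The plan is to turn the identity $F(n) = CJ(n)$ into the one-step recursion $F(n) = J(n) + BF(n)$ (since $C = (I-B)^{-1}$), and to exploit that $B$ is strictly lower block-triangular with blocks $B_{i,k} = U(i-k-1)$ for $i>k$. Taking entrywise absolute values and summing against the weight $r^{-N(n+1)}$, write
\[
P := \sum_{n\geq 0} r^{-N(n+1)} |F(n)|, \qquad Q := \sum_{n\geq 0} r^{-N(n+1)} |J(n)|,
\]
in their natural $N\times N$ block form. This produces the entrywise recursion
\[
P_{i,j} \leq Q_{i,j} + \sum_{k=1}^{i-1} |U(i-k-1)|\, P_{k,j}.
\]
Summing over the block-column index $j\in\{1,\ldots,N\}$ and over the $d$ inner columns then gives, for the $\ell^\infty$-row-sum $R_{i,s'}$ of $P$ at the row indexed by block $i$ and inner row $s'\in\{1,\ldots,d\}$, the scalar recursion
\[
R_{i,s'} \leq T_i(s') + \sum_{k=1}^{i-1}\sum_{v=1}^d |U(i-k-1)|_{s',v}\, R_{k,v},
\]
where $T_i(s')$ is the corresponding row-sum of $Q$.

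The heart of the proof is a precise bound on $T_i(s')$. Set $W := \sum_{i=0}^{N-1}\sum_{l\geq 0} r^{-N(l+1)} |U(Nl+i)|$, so that \eqref{eq:hyp} reads $\|W\|_\infty<1$, let $\sigma_{s'}$ denote the $s'$-th row-sum of $W$, and put $\beta_i(s') := \sum_{m=0}^{i-2}\sum_v |U(m)|_{s',v}$. I would establish
\[
T_i(s') + \beta_i(s') \,\leq\, \sigma_{s'} \,\leq\, \|W\|_\infty.
\]
This follows by expanding $T_i(s') = \sum_{t}\sum_{j=1}^N\sum_{n\geq 0} r^{-N(n+1)} |U(N(n+1)+i-j-1)|_{s',t}$ and splitting the $j$-sum: for $j<i$ one gets $|U(Nl+i')|$ with $i'=i-j-1\in[0,i-2]$, $l\geq 1$, and weight $r^{-Nl}$; for $j\geq i$ one gets $|U(Nl+i'')|$ with $i''=N+i-j-1\in[i-1,N-1]$, $l\geq 0$, and weight $r^{-N(l+1)}$. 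Since the ranges $[0,i-2]$ and $[i-1,N-1]$ partition $\{0,\ldots,N-1\}$, and since $r^{-Nl} = r^{N}\cdot r^{-N(l+1)}\leq r^{-N(l+1)}$ thanks to $r\leq 1$, comparison with the full expression for $\sigma_{s'}$ leaves the slack $r^{-N}\beta_i(s')\geq \beta_i(s')$ arising from the missing $l=0$ terms in the first block.

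With these ingredients I close the proof by induction on $i$. The base case $i=1$ is immediate, since $\beta_1(s')=0$ and $R_{1,s'}\leq T_1(s')=\sigma_{s'}\leq \|W\|_\infty$. For the step, assume $R_{k,v}\leq \|W\|_\infty<1$ for every $k<i$ and every $v\in\{1,\ldots,d\}$; then the scalar recursion combined with the bound on $T_i(s')$ yields
\[
R_{i,s'} \leq T_i(s') + \|W\|_\infty\,\beta_i(s') \leq T_i(s') + \beta_i(s') \leq \sigma_{s'} \leq \|W\|_\infty,
\]
the first inequality using $\|W\|_\infty<1$. Taking the maximum over $(i,s')$ gives $\|P\|_\infty\leq \|W\|_\infty<1$, which is exactly \eqref{eq:rownorm}. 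The main obstacle is the bookkeeping in bounding $T_i(s')$: identifying the two $(j,n)\mapsto(i',l)$ reindexings, verifying the bijection onto residues modulo $N$, and arranging the deficit produced by $r\leq 1$ so that it matches the $\beta_i(s')$ correction required to drive the induction through.
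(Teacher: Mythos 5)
Your proof is correct, but it takes a genuinely different route from the paper's. The paper works with the explicit inverse $C=(I-B)^{-1}$: it invokes Lemma~\ref{lemma:matinv} for the lower triangular block Toeplitz structure of $C$ and the identity \eqref{equ:b2}, majorises $M:=\sum_{n\ge0} r^{-N(n+1)}|F(n)|$ by $|C|\sum_{n\ge0} r^{-N(n+1)}|J(n)|$, and then computes the block-row sums of $M$ directly, splitting them into pieces $A_1,A_2,A_3$, using \eqref{equ:b2} to get $A_3\ge\sum_{k<i}|C|_{i,k}$ and the sign of $1-r^{-N}$ to discard the remaining corrections, so that each row sum is dominated by that of $r^{-N}S$. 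You never touch the inverse: from $(I-B)F(n)=J(n)$ you extract the entrywise recursion $P_{i,j}\le Q_{i,j}+\sum_{k<i}|U(i-k-1)|\,P_{k,j}$ and close it by induction on the block row, the key point being $T_i(s')+\beta_i(s')\le\sigma_{s'}$; your bookkeeping there (the two reindexings $(j,n)\mapsto(i',l)$ exactly covering the residues modulo $N$, the comparison $r^{-Nl}\le r^{-N(l+1)}$ valid because $r\le1$, and the slack $r^{-N}\beta_i(s')\ge\beta_i(s')$ coming from the missing $l=0$ terms) is correct and plays precisely the role that the $A_3$ bound and the condition $1-r^{-N}\le0$ play in the paper. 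Your argument is shorter, makes Lemma~\ref{lemma:matinv} unnecessary for this particular lemma, and gives the slightly sharper conclusion that the norm in \eqref{eq:rownorm} is bounded by the left-hand side of \eqref{eq:hyp}; the paper's computation, on the other hand, produces explicit expressions in terms of $|C|$, which fits the surrounding machinery where $(I-B)^{-1}$ is needed anyway in the proof of Theorem~\ref{thm:res}. One small point worth stating explicitly if you write this up: since all terms are non-negative, the interchange of the $n$-sum with the block multiplication and the a priori finiteness of $P$ are handled by monotone convergence, with finiteness then delivered row by row by the induction itself (the base case $R_{1,s'}\le T_1(s')=\sigma_{s'}<\infty$ follows from \eqref{eq:hyp}), so there is no circularity.
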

Although the entries $[J(n)]_{i,j}$ of $J(n)$ have the same form for all $i$ and $j$, it is convenient in the proof to express them in the slightly differing forms displayed above.
\begin{proof}
We use the notation, for $\lambda\in\{0,\cdots N-1\}$,
$S_\lambda := \sum_{l=0}^{\infty}r^{-Nl}|U(Nl+\lambda)|$ , $S := \sum_{l=0}^{N-1}S_l$ and $M := \sum_{i=0}^{\infty}r^{-N(i+1)}|F(i)|$. 	Note by \eqref{eq:hyp} that $\| r^{-N}S\|_{\infty}<1$.
	Hence,
	\[
 		0\leq\left[\sum_{l=0}^{\infty}r^{-N(l+1)}|J(l)|\right]_{i,j} =
  	\begin{cases}
   		r^{-N}S_{N+i-j-1}, & \text{if } i \leq j, \\
   		S_{i-j-1} - |U(i-j-1)|, & \text{if } i > j.
  	\end{cases}
	\]
	Also, for $i> j$ and by noting that $(I-B)$ is a matrix of the form in Lemma \ref{lemma:matinv}, we use \eqref{equ:b1}
	\begin{align*}
		M_{i,j} &\leq \sum_{k=1}^{N}|(I-B)^{-1}|_{i,k}\left[\sum_{n=0}^{\infty}r^{-N(l+1)}|J(l)|\right]_{k,j} \\
		&= \sum_{k=1}^{j}|C|_{i,k}\left[\sum_{n=0}^{\infty}r^{-N(l+1)}|J(l)|\right]_{k,j} +
		 \sum_{k=j+1}^{i}|C|_{i,k}\left[\sum_{n=0}^{\infty}r^{-N(l+1)}|J(l)|\right]_{k,j} \\
		 &= \sum_{k=1}^{j}|C|_{i,k}r^{-N}S_{N+k-j-1}+\sum_{k=j+1}^{i}|C|_{i,k}(S_{k-j-1}-|U(k-j-1)|),
	\end{align*}
	where $C:=(I-B)^{-1}$. Similarly	for $i\leq j$ we have
$M_{i,j} \leq \sum_{k=1}^{i}|C|_{i,k}r^{-N}S_{N+k-j-1}$.
%	We note that as $u(n)\geq0$ for all $n\in\N$, we have by Lemma \ref{lemma:matinv} $(I-B)^{-1}_{i,j}\geq0$ for $1\leq i,j\leq N$ and hence $M_{i,j}\geq0$ for $1\leq i,j\leq N$ (ie. $|M_{i,j}|=M_{i,j}$).
	We note that, by definition $M$ is a non-negative matrix, that is, in verifying \eqref{eq:rownorm} we need consider the row sums of $M$ rather than $|M|$.
	We now compute the sum of each row of $M$ and show that they are all less than one. The sum of the first and second rows are special cases. We compute the sum for the first row and also the general case; the sum for the second row is similar to the general case.

	For $i=1$,
	\[
		\sum_{j=1}^{N}M_{1,j} \leq \sum_{j=1}^{N}|C|_{1,1}r^{-N}S_{N-j} = r^{-N}\sum_{j=1}^{N}S_{N-j} = r^{-N}S.
	\]
	Indeed,
	\[
		\left\|\sum_{j=1}^{N}M_{1,j}\right\|_{\infty} = \max_{1\leq p\leq d}\sum_{q=1}^{d}[\sum_{j=1}^{N}M_{1,j}]_{p,q}
		\leq \max_{1\leq p\leq d}\sum_{q=1}^{d}r^{-N}[S]_{p,q} <1.
	\]
	For $i\geq 3$
	\begin{align*}
		\sum_{j=1}^{N}M_{i,j} &= \sum_{j=1}^{i-1}M_{i,j}+\sum_{j=i}^{N}M_{i,j} \\
		&\leq \sum_{j=1}^{i-1}\sum_{k=1}^{j}|C|_{i,k}r^{-N}S_{N+k-j-1} + \sum_{j=1}^{i-1}\sum_{k=j+1}^{i}|C|_{i,k}S_{k-j-1} \\
		&\qquad - \sum_{j=1}^{i-1}\sum_{j+1}^{i}|C|_{i,k}|U(k-j-1)| + \sum_{j=i}^{N}\sum_{k=1}^{i}|C|_{i,k}r^{-N}S_{N+k-j-1} \\
		&= \sum_{k=1}^{i-1}|C|_{i,k}\sum_{j=k}^{i-1}r^{-N}S_{N+k-j-1} + \sum_{k=2}^{i}|C|_{i,k}\sum_{j=1}^{k-1}S_{k-j-1} \\
		&\qquad - \sum_{k=2}^{i}|C|_{i,k}\sum_{j=1}^{k-1}|U(k-j-1)| + \sum_{k=1}^{i}|C|_{i,k}\sum_{j=i}^{N}r^{-N}S_{N+k-j-1}.
	\end{align*}
	By moving the $k=i$ terms from the second and fourth sum, and combining the first and fourth sum we get
	\begin{gather} %		\begin{equation}\label{equ:A2}
		\sum_{j=1}^{N}M_{i,j} \leq %\underbrace{
		\sum_{k=1}^{i-1}|C|_{i,k}\sum_{j=k}^{N}r^{-N}S_{N+k-j-1}
		+ \sum_{k=2}^{i-1}|C|_{i,k}\sum_{j=1}^{k-1}S_{k-j-1}
		%}_{\text{=A2}}
		\notag \\
		- %\underbrace{
		\sum_{k=2}^{i}|C|_{i,k}\sum_{j=1}^{k-1}|U(k-j-1)|
		%}_{\text{=A3}}
		+ %\underbrace{
		\sum_{j=i}^{N}r^{-N}S_{N-j+i-1} + \sum_{j=1}^{i-1}S_{i-j-1}
		%}_{\text{=A1}}
		:= A_2-A_3+A_1. \label{eq:Asum}
	\end{gather}
	where the first two sums are $A_2$, the next is $A_3$ and the last two are $A_1$. 
Next, we write $A_1$ as
	\begin{equation}\label{eq:A1}
		A_1 = \sum_{l=i-1}^{N-1}r^{-N}S_l + \sum_{l=0}^{i-2}S_l = r^{-N}S + (1-r^{-N})\sum_{l=0}^{i-2}S_{l}.
	\end{equation}
	As for $A_2$ we rearrange to get
	\begin{align}
		A_2 &= \sum_{k=1}^{i-1}|C|_{i,k}\sum_{l=k-1}^{N-1}r^{-N}S_l + \sum_{k=2}^{i-1}|C|_{i,k}\sum_{l=0}^{k-2}S_l \notag\\
		&= \sum_{k=2}^{i-1}|C|_{i,k}r^{-N}\sum_{l=0}^{N-1}S_l - \sum_{k=2}^{i-1}|C|_{i,k}r^{-N}\sum_{l=0}^{k-2}S_l
		+ |C|_{i,1}\sum_{l=0}^{N-1}r^{-N}S_l + \sum_{k=2}^{i-1}|C|_{i,k}\sum_{l=0}^{k-2}S_l \notag\\
		&= \sum_{k=2}^{i-1}|C|_{i,k}r^{-N}S + (1-r^{-N})\sum_{k=2}^{i-1}|C|_{i,k}\sum_{l=0}^{k-2}S_l + |C|_{i,1}r^{-N}S. \label{eq:A2}
	\end{align}
	Regarding $A_3$, we note that by \eqref{eq:toep} and \eqref{equ:b2}
	$C_{i,k} = C_{i-k+1,1} = -\sum_{l=0}^{i-k-1}C_{i-k-l,1}U(l)$ for $i>k$. 		Therefore
	\begin{align}\label{eq:A3}
		A_3 &= \sum_{l=2}^{i}|C|_{i,l}\sum_{k=1}^{l-1}|U(l-1-k)| = \sum_{k=1}^{i-1}\sum_{l=k+1}^{i}|C|_{i,l}|U(l-k-1)|
		= \sum_{k=1}^{i-1}\sum_{l=1}^{i-k}|C|_{i,l+k}|U(l-1)| \notag \\
		&= \sum_{k=1}^{i-1}\sum_{l=1}^{i-k+1-1}|C|_{i-l-k+1,1}|U(l-1)|
		= \sum_{k=1}^{i-1}\sum_{l=0}^{i-k-1}|C|_{i-l-k,1}|U(l)|
		\geq \sum_{k=1}^{i-1}|C|_{i-k+1,1} = \sum_{k=1}^{i-1}|C|_{i,k}.
	\end{align}
	Inserting \eqref{eq:A1}, \eqref{eq:A2} and \eqref{eq:A3} into \eqref{eq:Asum} we can write.
	\begin{align*}
		\sum_{j=1}^{N}M_{ij} &\leq r^{-N}S + (1-r^{-N})\sum_{l=0}^{i-2}S_{l} + \sum_{k=2}^{i-1}|C|_{ik}r^{-N}S  
		\\ &\qquad 
	   + (1-r^{-N})\sum_{k=2}^{i-1}|C|_{ik}\sum_{l=0}^{k-2}S_l + |C|_{i1}r^{-N}S %- |C|_{i,1}
%	   \\ &\qquad 
	   -\sum_{k=1}^{i-1}|C|_{ik}  \\
		&= r^{-N}S + (1-r^{-N})\sum_{l=0}^{i-2}S_l + \sum_{k=1}^{i-1}|C|_{i,k}(r^{-N}S-I_d)	
		%\\&\qquad 
		+ (1-r^{-N})\sum_{k=2}^{i-1}|C|_{i,k}\sum_{l=0}^{k-2}S_l. % + |C|_{i,1}(r^{-N}S-I_d).
	\end{align*}
	We note that by conditions \eqref{eq:hyp} we have $1-r^{-N}\leq0$. Therefore
	\[
		\sum_{j=1}^{N}M_{i,j} \leq r^{-N}S + \sum_{k=1}^{i-1}|C|_{i,k}(r^{-N}S-I_d).
	\]
	Letting $[\sum_{j=1}^{N}M_{i,j}]_{p,q}$ denote the $p,q^{th}$ element of the $d\times d$ matrix $\sum_{j=1}^{N}M_{i,j}$, we have
	\begin{align*}
		\left\| \sum_{j=1}^{N}M_{i,j} \right\|_{\infty} 
		&= \max_{1\leq p \leq d}\sum_{q=1}^{d}[\sum_{j=1}^{N}M_{i,j}]_{p,q} \\
		&\leq \max_{1\leq p \leq d}\left( \sum_{q=1}^{d}r^{-N}[S]_{p,q} + \sum_{q=1}^{d}\sum_{\alpha=1}^{d}[\sum_{k=1}^{i-1}|C|_{i,k}]_{p,\alpha} [r^{-N}S-I_d]_{\alpha,q} \right) \\
	&=	\max_{1\leq p \leq d}\left( \sum_{q=1}^{d}r^{-N}[S]_{p,q} + \sum_{\alpha=1}^{d}[\sum_{k=1}^{i-1}|C|_{i,k}]_{p,\alpha} (r^{-N}\sum_{q=1}^{d}[S]_{\alpha,q}-1) \right) \\
	 &< \max_{1\leq p \leq d}\bigl( r^{-N}\sum_{q=1}^{d}[S]_{p,q} \bigr) < 1.
	\end{align*}
	With the last two inequalities holding as $r^{-N}\sum_{q=1}^{d}[S]_{\alpha,q}<1$ for all $\alpha\in\{1,\cdots,d\}$.
	Thus $\|M \|_{\infty} = \max_{1\leq i\leq N}( \| \sum_{j=1}^{N}M_{i,j} \|_{\infty} )<1$ and
%	 is the sum of the first row of $M$, which we have shown is less than one. Thus
	 \eqref{eq:rownorm} is satisfied.
\end{proof}

\section{Main Results}\label{sect:res}
 We next show that the solution $Z$ of equation \eqref{equ:resres} is in $\mathcal{WP}(r,N)$ with weight function $\phi$, when the kernel $U$ lies in $\mathcal{WP}(r,N)$ with weight function $\phi$. Once the behaviour of $Z$ is known, a variation of constants formula readily enables us to determine the asymptotic behaviour of the solution of \eqref{equ:convol}. %We denoted the $p,q^{th}$ element of a matrix $A$ by either $A_{p,q}$ or $[A]_{p,q}$, whichever is more convenient.
 Firstly we give a lemma concerning the summability of $Z$.
% \begin{remark}
%In the scalar case, $d=1$ (which shall be studied via examples towards the end of this paper), we remark that with the added assumption of $U\geq0$ the condition %\eqref{equ:c2} and 
%\eqref{equ:c3} is necessary and sufficient for $(r^{-l} Z(l))_{l\geq 1}$ to lie in $\ell^1(\mathbb{Z}^+)$. 
%%\eqref{eq:reslimit} to hold. %, \cite{app:2010}.
%\end{remark}
\begin{lemma}\label{lm:sumZ}
	Let $Z$ be the solution of \eqref{equ:resres}. If \eqref{equ:c3} holds then $\sum_{i=0}^{N-1}\sum_{n=0}^{\infty}r^{-N(n+1)}|Z(Nn+i)|$ is finite and the following inequality holds:
	\begin{align*}
		\sum_{i=0}^{N-1}\sum_{n=0}^{\infty}r^{-N(n+1)}|Z(Nn+i)| 
		&\leq r^{-N}I +\\
		& \left(	\sum_{i=0}^{N-1}\sum_{n=0}^{\infty}r^{-N(n+1)}|U(Nn+i)|\right)
		\left(	\sum_{i=0}^{N-1}\sum_{n=0}^{\infty}r^{-N(n+1)}|Z(Nn+i)|\right).
	\end{align*}
\end{lemma}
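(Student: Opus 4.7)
The plan is to recast both sums using a unified weight, establish a sub-multiplicative property of that weight, and then iterate the resolvent equation with a truncation-plus-monotone-limit argument.

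Set $w(m) := r^{-N(\lfloor m/N \rfloor + 1)}$. Writing $m = Nn + i$ with $0 \leq i \leq N-1$ identifies
$\mathcal{Z} := \sum_{i=0}^{N-1}\sum_{n=0}^{\infty} r^{-N(n+1)}|Z(Nn+i)|$ with $\sum_{m=0}^{\infty} w(m)|Z(m)|$, and the analogous sum for $U$ with $\mathcal{V} := \sum_{m=0}^{\infty} w(m)|U(m)|$.

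The pivotal technical step is the sub-multiplicative bound $w(k+l+1) \leq w(k) w(l)$ for all $k, l \geq 0$, which holds whenever $r \leq 1$ (an ingredient of \eqref{equ:c3}). Writing $k = N q_k + s_k$ and $l = N q_l + s_l$ with $0 \leq s_k, s_l \leq N-1$, one checks that $\lfloor (k+l+1)/N \rfloor \leq q_k + q_l + 1$; combined with $r^{-N} \geq 1$ this gives the estimate.

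With this in hand, apply the triangle inequality to \eqref{equ:resres} to obtain $|Z(m)| \leq \sum_{j=0}^{m-1}|U(m-1-j)||Z(j)|$ for $m \geq 1$, alongside $|Z(0)| = I$. Multiplying by $w(m)$, summing from $m = 0$ to $M$, reindexing the double sum via $m = j + k + 1$, and invoking the sub-multiplicativity yields the truncated bound
\[ \mathcal{Z}_M \leq r^{-N} I + \mathcal{V}_M \, \mathcal{Z}_M, \]
where $\mathcal{Z}_M := \sum_{m=0}^M w(m) |Z(m)|$ and $\mathcal{V}_M := \sum_{m=0}^M w(m) |U(m)|$. Since $\mathcal{V}_M \leq \mathcal{V}$ entrywise and $\|\mathcal{V}\|_\infty < 1$ by \eqref{equ:c3}, the matrix $I - \mathcal{V}_M$ is invertible with non-negative inverse (via the Neumann series), so $\mathcal{Z}_M$ is uniformly bounded above by $r^{-N}(I - \mathcal{V})^{-1}$. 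Monotone convergence then supplies finiteness of $\mathcal{Z}$, and passing to the limit $M \to \infty$ in the truncated inequality delivers the stated bound.

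The main obstacle will be the sub-multiplicativity of $w$: without it the convolution in the resolvent equation would fail to decouple into a clean product of the two series. The condition $r \leq 1$ is essential here, because the floor-function inequality reverses when $r > 1$.
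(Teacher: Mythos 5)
Your proof is correct, and it follows the same overall strategy as the paper --- iterate the resolvent equation \eqref{equ:resres}, bound the weighted convolution by the product of the weighted $U$-sum and the truncated $Z$-sum, invert $I-\mathcal{V}$ via the non-negative Neumann series, and finish by monotone convergence in the truncation parameter --- but your bookkeeping is genuinely different and considerably cleaner. The paper splits $Z$ and $U$ into the $N$ residue-class subsequences $Z_i(n)=Z(Nn+i)$, $U_i(n)=U(Nn+i)$, writes out separate representations for $Z_0(n)$, $Z_i(0)$ and $Z_i(n)$, and then performs a lengthy splitting, reindexing and recombination of four- and five-fold sums, absorbing index mismatches in the weight exponents through the inequality $1\leq r^{-N}$. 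You instead fold the whole double sum into a single weighted sum $\sum_m w(m)|Z(m)|$ with $w(m)=r^{-N(\lfloor m/N\rfloor+1)}$ and isolate the entire combinatorial content of the paper's manipulations in the one sub-multiplicativity estimate $w(k+l+1)\leq w(k)w(l)$, which is exactly where $r\leq 1$ enters (just as $1\leq r^{-N}$ does in the paper); with that, the convolution bound $\mathcal{Z}_M\leq r^{-N}I+\mathcal{V}_M\mathcal{Z}_M$ and the correct ordering of the matrix product (the $U$-sum on the left) fall out immediately. What the paper's block-by-block computation buys is that it is written in the same notation ($Z_i$, $U_i$) reused in the proof of Theorem~\ref{thm:res}; what your formulation buys is a short, transparent argument in which the role of $r\leq1$ and the source of the extra additive $r^{-N}I$ (the $m=0$ term $w(0)|Z(0)|$) are explicit, and which would generalise readily to other sub-multiplicative weights. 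The only presentational nicety worth adding is to pass from $\mathcal{V}_M$ to $\mathcal{V}$ before inverting (using $\mathcal{Z}_M\geq0$), so that the uniform bound $\mathcal{Z}_M\leq r^{-N}(I-\mathcal{V})^{-1}$ is obtained from a single fixed matrix inverse, exactly as in the paper.
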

\begin{theorem}\label{thm:res}
	Let $\{Z(n), n\in \N\}$ be the sequence which satisfies \eqref{equ:resres}. Suppose that $U\in\mathcal{WP}(r,N)$ with 
	weight function $\phi\in \mathcal{W}(r)$ such that there exists a sequence of $d\times d$ matrices $\{A_{i}\}_{i=0}^{N-1}$ and
\begin{equation}\label{equ:c2}
	  \lim_{n\to\infty}\frac{1}{\phi(Nn)}U(Nn+i)= A_i ,\quad i\in\{0,1,2,...,N-1\},   %u(n) \geq 0, \forall n\in\N \\
\end{equation}
\begin{equation}\label{equ:c3}
	\max_{1\leq p\leq d} \left(\sum_{q=1}^{d} \sum_{i=0}^{N-1}\sum_{l=0}^{\infty}r^{-N(l+1)}|U(Nl+i)|_{p,q} \right) < 1 , \quad r \leq 1,
\end{equation}
for some $N\in\N$. Then $Z\in \mathcal{WP}(r,N)$ and there exists a $\{\rho_i \} \in\R^{d\times d}$ such that
\begin{equation}\label{eq:reslimit}
	\lim_{n\to\infty}\frac{1}{\phi(Nn)}Z(Nn+i) =: \rho_i.
\end{equation}
\end{theorem}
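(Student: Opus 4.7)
The plan is to lift equation \eqref{equ:resres} into a higher-dimensional Volterra equation by sampling $Z$ on blocks of length $N$, and then apply Theorem \ref{thm:3}. Define the block-sampled sequence
\[
    Y(n) := \bigl( Z(Nn)^T, Z(Nn+1)^T, \ldots, Z(Nn+N-1)^T \bigr)^T \in \R^{Nd \times d}, \quad n\geq 0,
\]
so that $Y(0)$ is determined by $Z(0)=I$ and the first $N-1$ iterates of \eqref{equ:resres}. I would then write out $Z(Nn+N+i)$ for each $i\in\{0,\dots,N-1\}$ using the resolvent equation and reindex the summation variable via $j = Nm+k$ with $k\in\{0,\dots,N-1\}$. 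This splits the convolution into three pieces: a sum over past blocks $m\in\{0,\dots,n-1\}$, the current block $m=n$, and finally the terms $\sum_{k=0}^{i-1}U(i-1-k)Z(N(n+1)+k)$ that involve components of $Y(n+1)$ itself. Moving this last group to the left-hand side yields
\[
    (I - B)\,Y(n+1) \;=\; \sum_{l=0}^{n} J(l)\,Y(n-l),
\]
where $B$ has precisely the block lower-triangular Toeplitz structure of Lemma \ref{lemma:matinv} with $B_{p,1}=U(p-2)$ for $p\geq 2$, and the block entries of $J(l)$ agree with those in Lemma \ref{lm:spec} (the two cases $i\leq j$ and $i>j$ give the same formula $U(Nl+N+i-j-1)$ but arise naturally from the split between current and past blocks).

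Next, I would invoke Lemma \ref{lemma:matinv} to conclude that $(I-B)$ is invertible (its determinant is $1$) and to express $(I-B)^{-1}$ explicitly as a block lower triangular Toeplitz matrix. Setting $F(l) := (I-B)^{-1}J(l)$ reduces the system to
\[
    Y(n+1) = \sum_{l=0}^{n} F(l)\,Y(n-l), \quad n\geq 0,
\]
which is exactly of the form \eqref{equ:1a} with $f\equiv 0$. I would apply Theorem \ref{thm:3} with weight sequence $\Phi(n) := \phi(Nn)$; Lemma \ref{lemma:wgt} gives $\Phi\in\mathcal{W}(\tau)$ for $\tau:=r^N$, which is the appropriate weight class for the lifted equation.

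Two conditions of Theorem \ref{thm:3} remain to be verified. For the existence of $L_\Phi F$: each entry $J(l)_{p,q} = U(Nl + N+p-q-1)$ can be rewritten as $U(N(l+c)+i)$ for appropriate $c\in\{0,1\}$ and $i\in\{0,\ldots,N-1\}$, and then the hypothesis \eqref{equ:c2} together with the asymptotic ratio $\phi(N(l+c))/\phi(Nl)\to r^{Nc}$ (an immediate consequence of $\phi\in\mathcal{W}(r)$) yields $L_\Phi J$, and hence $L_\Phi F = (I-B)^{-1}L_\Phi J$. For the spectral radius condition $\rho(\tau^{-1}\widetilde{|F|}(\tau))<1$, Lemma \ref{lm:spec} gives the stronger statement $\|\sum_{i\geq 0}r^{-N(i+1)}|F(i)|\|_\infty<1$ under hypothesis \eqref{equ:c3}, and $\rho(A)\le\|A\|_\infty$ finishes the job. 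Theorem \ref{thm:3} then delivers $L_\Phi Y$, whose $(i+1)$-th block is precisely $\rho_i = \lim_{n\to\infty}Z(Nn+i)/\phi(Nn)$, establishing \eqref{eq:reslimit} and hence $Z\in\mathcal{WP}(r,N)$ with the same weight function $\phi$.

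I expect the main obstacle to be the combinatorial bookkeeping in the derivation of the lifted equation—keeping track of which terms belong to $Y(n+1)$ versus to past $Y(m)$, and recognising that the resulting "self-interaction" matrix $B$ is of exactly the block Toeplitz form treated in Lemma \ref{lemma:matinv}. Once that identification is in place, Lemmas \ref{lemma:wgt}, \ref{lemma:matinv} and \ref{lm:spec} reduce the remaining work to a direct application of Theorem \ref{thm:3}; Lemma \ref{lm:sumZ} then guarantees the $Z$-transform-type sums appearing in formula \eqref{eq:thm3b} are finite, so the limits $\rho_i$ are well-defined elements of $\R^{d\times d}$.
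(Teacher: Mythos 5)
Your proposal is correct and follows essentially the same route as the paper's own proof: block-sampling $Z$ over periods of length $N$, isolating the within-block terms into the lower triangular Toeplitz matrix $B$ (invertible by Lemma \ref{lemma:matinv}), setting $F=(I-B)^{-1}J$, and then applying Theorem \ref{thm:3} with weight $\Phi(n)=\phi(Nn)\in\mathcal{W}(r^N)$ (Lemma \ref{lemma:wgt}), with the spectral radius condition secured via Lemma \ref{lm:spec} and $\rho(A)\leq\|A\|_{\infty}$. The only cosmetic differences are the index shift in the lifted equation and your (harmless, not strictly needed) invocation of Lemma \ref{lm:sumZ} for finiteness of the transform terms.
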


%explain hypothesises and result
\begin{remark}
Condition \eqref{equ:c2} gives us the rate of decay of the components of $U(Nn+i)$ for each $i$. Hence it encapsulates both the decay and periodic components of the kernel. Condition \eqref{equ:c3} is imposed in order to ensure stability of the problem. While the $||\cdot||_{\infty}$ is employed here for simplicity and to ease the calculations involved, we speculate that other norms may also be possible while noting the equivalence of norms for scalar functions.  The result \eqref{eq:reslimit} is analogous to \eqref{equ:c2}, that is that the solution of \eqref{equ:resres} inherits the same rate of decay as $U$, and also retains a similar periodic component. We note that while it is possible to calculate an explicit formula for $\rho_i$, it is in general far more complicated than the constant matrix $A_i$. That such limits may in general prove rather unilluminating
may be seen from the explicit example in Section~\ref{sect:ex}.
\end{remark}
%\begin{remark}
%	Theorem~\ref{thm:res} may alternatively be stated as; conditional on \eqref{equ:c3} holding, if $U\in\mathcal{WP}(r,N)$ then $Z\in\mathcal{W}(r,N)$.
%\end{remark}
\begin{remark}
	Later, we give a partial converse to Theorem~\ref{thm:res} which illustrates the sharpness of  \eqref{equ:c2}, \eqref{equ:c3}.
\end{remark}
\begin{proof}[Proof of Theorem~\ref{thm:res}]
	We first develop a system of equations from \eqref{equ:resres}, which can be put into the form of \eqref{equ:1a}. We then focus on ensuring that all the conditions of Theorem \ref{thm:3} hold.
	From \eqref{equ:resres} we can write for $i>0$,
	\begin{align*}
		Z(Nn+i) &= \sum_{j=0}^{Nn+i-1}U(j)Z(Nn+i-1-j) \\
		&= \sum_{k=0}^{n}\sum_{j=0}^{i-1}U(Nk+j)Z(Nn+i-1-Nk-j) \\
		 &\qquad + \sum_{k=0}^{n-1}\sum_{j=i}^{N-1}U(Nk+j)Z(Nn+i-1-Nk-j) \\
		&= \sum_{j=0}^{i-1}U(j)Z(Nn+i-j-1)
		 + \sum_{k=0}^{n-1}\sum_{j=0}^{i-1}U(N(k+1)+j)Z(N(n-k-1)+i-j-1) \\
		&\qquad + \sum_{j=i}^{N-1}\sum_{k=0}^{n-1}U(Nk+j)Z(N(n-k-1)+N+i-j-1) \\
		&= \sum_{j=0}^{i-1}U_{j}(0)Z_{i-j-1}(n) + \sum_{j=0}^{i-1}\sum_{k=0}^{n-1}\bar{U}_{j}(k)Z_{i-j-1}(n-1-k) \\
		 &\qquad + \sum_{j=i}^{N-1}\sum_{k=0}^{n-1}U_j(k)Z_{N+i-j-1}(n-1-k)
	\end{align*}
	where in the last line, we set $Z_i(n) := Z(Nn+i)$; $U_i(n) := U(Nn+i)$; and $\bar{U}_{i}(n) := U_{i}(n+1)$. Thus 	
	\begin{align}\label{eq:star}
	Z_i(n) = \sum_{j=0}^{i-1}U_{j}(0)Z_{i-j-1}(n)
	 + \sum_{l=0}^{i-1}\left(\bar{U}_{i-1-l}\ast Z_l\right)(n-1)
	 + \sum_{l=i}^{N-1}\left(U_{N+i-1-l}\ast Z_l\right)(n-1).
	\end{align}
	In the case when $i=0$, a similar result is obtained, but neither the second nor the third term appear in \eqref{eq:star}.
	Thus, for $i \in \{0,1,...,N-1\}$ we generate a system of equations
	\begin{equation}\label{equ:matres}
		\un{Z}(n) = B\cdot \un{Z}(n) + (J\ast\un{Z})(n-1), \quad n\geq1,
	\end{equation}
	where $\un{Z}(n)\in \mathbb{R}^{Nd\times d}$, $B\in \mathbb{R}^{Nd\times Nd}$ and $J(n)\in\mathbb{R}^{Nd\times Nd}$ where for $p,q\in\{1,2..,N\}$ we define
	\begin{equation}\label{eq:defJ}
		[\un{Z}(n)]_{p} = Z_{p-1}(n), \quad
%	\]
%	\[
		B_{p,q} =
		\begin{cases}
%			\begin{align}
				0, & \text{if } p\leq q, \\
				U(p-q-1), & \text{if } p>q. \\
%			\end{align}
		\end{cases}, 
	\quad
		J(n)_{p,q} =
		\begin{cases}
			U_{N+p-q-1}(n), & \text{if } p\leq q, \\
			\bar{U}_{p-q-1}(n), & \text{if } p>q.
		\end{cases}
	\end{equation}
%	\[
%		Z(n) := \begin{pmatrix} Z_0(n) \\Z_1(n) \\ \vdots \\ Z_{N-1}(n) \end{pmatrix}, \quad
%%		G(n) := \begin{pmatrix} \G_0(n) \\\G_1(n) \\ \vdots \\ \G_{N-1}(n) \end{pmatrix}
%%	\]
%%	\[
%		B := \begin{pmatrix}
%  0      & 0      & \cdots & 0      & 0 \\     %	& \cdots & 0 \\
%  U(0) 	 & 0      & \cdots & 0      & 0 \\     %	& \cdots & 0 \\
%  U(1)   & U(0)   & \ddots & 0      & 0 \\     %	& \cdots & 0 \\
%  \vdots & \vdots & \ddots & \ddots & \vdots \\ %	& \cdots & \vdots \\
%  U(N-2) & U(N-3) & \cdots & U(0)   & 0 \\     %	& \cdots & 0  \\
%  %	\vdots & \vdots & \cdots & \cdots & \ddots & \ddots & \vdots \\
%  %	u(N-2) & u(N-3) & \cdots & \cdots & \cdots & u(0)   & 0
% \end{pmatrix},
%	\]
%	\[
%		J(n) := \begin{pmatrix}
%		\G_{N-1}(n)   & \G_{N-2}(n)   & \cdots & \G_1(n)       & \G_0(n) \\
%		\bar{\G}_0(n) & \G_{N-1}(n)   & \cdots & \G_2(n)       & \G_1(n) \\
%		\bar{\G}_1(n) & \bar{\G}_0(n) & \ddots & \cdots        & \G_2(n) \\
%		\vdots        & \vdots        & \ddots & \ddots        & \vdots \\
%		\bar{\G}_{N-2}& \bar{\G}_{N-3}& \cdots & \bar{\G}_0(n) & \G_{N-1}(n)
%		\end{pmatrix}.
%	\]
	Note that $I-B$ is in the form given in \eqref{eq:iminusb} in Lemma \ref{lemma:matinv}, so $(I-B)^{-1}$ exists. Equation \eqref{equ:matres}  simplifies to
	\begin{equation}\label{equ:matres2}
				\un{Z}(n) = (F\ast\un{Z})(n-1), \quad n\geq1,
	\end{equation}
where $F(n) := (I-B)^{-1}J(n)$. In order to satisfy the conditions of Theorem \ref{thm:3}, we need to show that, for some weight function, $\mu$, in $\mathcal{W}(s)$, $L_{\mu}F$ exists and that
	\begin{equation}\label{equ:spec1}
		\rho\left(\sum_{l=0}^{\infty}s^{-(l+1)}F(i)\right) < 1.
	\end{equation}
	We note that a natural choice of $\mu$ is $\{\Phi(n)\}_{n\geq0} := \{\phi(Nn)\}_{n\geq0}$ as $L_{\Phi}F$ is well-defined. We note by Lemma~\ref{lemma:wgt} that $\Phi$ is in $\mathcal{W}(r^N)$.	
	% Show $L_{\Phi}h$ and $L_{\Phi}F$ exist
%	\begin{align*}
%		[L_{\Phi}h]_i &= \lim_{n\to\infty}\frac{1}{\Phi(n)}[(I-B)^{-1}G(n+1)]_i \\
%		&= \lim_{n\to\infty}\frac{\Phi(n+1)}{\Phi(n)}\frac{1}{\Phi(n+1)}\sum_{j=1}^{N}(I-B)^{-1}_{i,j}u(N(n+1)+j-1) \\
%		&= \sum_{j=1}^{N}(I-B)^{-1}_{i,j}a_{j-1}r^N.
%	\end{align*}
%	Also
	Observe that $L_{\Phi}F = (I-B)^{-1}\lim_{n\to\infty}J(n)/\Phi(n)$,
	and the limit exists because
	\begin{equation}\label{eq:deflimJ}
		\left[\lim_{n\to\infty}\frac{1}{\Phi(n)}J(n)\right]_{p,q} =
		\begin{cases}
			A_{N+p-q-1}, & \text{if } p\leq q, \\
			A_{p-q-1}r^N, & \text{if } p > q.
		\end{cases}
	\end{equation}
	Turning our attention to \eqref{equ:spec1}, we see what is needed is
	\begin{equation}\label{equ:spec2}
		\rho\left(\sum_{l=0}^{\infty}r^{-N(l+1)}|F(i)|\right) < 1.
	\end{equation}
	However, by \eqref{equ:linalg} we need only check $\|\sum_{i=0}^{\infty}r^{-N(i+1)}|F(i)|$ $\|_{\infty}<1$. % We use the notation
%	\begin{gather*}
%		S_i := \sum_{l=0}^{\infty}r^{-Nl}|U(Nl+i)| ,\quad
%		S := \sum_{l=0}^{N-1}S_l ,\quad
%		M := \sum_{i=0}^{\infty}r^{-N(i+1)}|F(i)|.
%	\end{gather*}
	Applying Lemma~\ref{lm:spec} we see that \eqref{equ:spec2} holds. Therefore, $L_{\Phi}\un{Z}$ exists and is given by Theorem~\ref{thm:3}. Hence, by looking at the components of $\un{Z}$ we see that $Z(nN+i)/\phi(Nn) \to \rho_i$, as $n\to\infty$.
\end{proof}
\begin{proof}[Proof of Lemma~\ref{lm:sumZ}]
Define $Z_i(n)=Z(Nn+i)$, $U_{i}(n)=U(Nn+i)$ for $i\in\{0,1\ldots,N-1\}$. Then by \eqref{equ:resres}, 
$Z_0(0) =I$, and 
\begin{align*}
	 Z_i(0) &= \sum_{p=0}^{i-1}U_{i-p-1}(0)Z_p(0), \quad i\in \{1,\ldots,N-1\}, \\
	Z_0(n) &= \sum_{l=0}^{n-1}\sum_{p=0}^{N-1}U_{N-p-1}(n-l-1)Z_p(l), \quad n\geq 1, \\
	Z_i(n) &= \sum_{l=0}^{n}\sum_{p=0}^{i-1}U_{i-p-1}(n-l)Z_p(l) + \sum_{l=0}^{n-1}\sum_{p=i}^{N-1}U_{N+i-p-1}(n-l-1)Z_p(l), \quad n\geq 1, \, 1\leq i\leq N-1.
\end{align*}
Then taking absolute values across \eqref{equ:resres} and summing we have
\begin{align*}
	\sum_{i=0}^{N-1}\sum_{n=0}^{T}r^{-N(n+1)}|Z_i(n)| \leq r^{-N}|Z_0(0)| + \sum_{i=1}^{N-1} r^{-N}|Z_i(0)| 
	&+ \sum_{n=1}^{T}r^{-N(n+1)}|Z_0(n)| \\
	 &\quad + \sum_{i=1}^{N-1}\sum_{n=1}^{T}r^{-N(n+1)}|Z_i(n)|,
\end{align*}
where $T$ is a large fixed integer. Substituting the above representations for $Z$ into this equation and permuting sums yields
\begin{align*}
\lefteqn{
	\sum_{i=0}^{N-1}\sum_{n=0}^{T}r^{-N(n+1)}|Z_i(n)|}\\
	& \leq r^{-N}I + \sum_{p=0}^{N-2}\sum_{q=0}^{N-p-2}r^{-N}|U_q(0)||Z_p(0)|  + \sum_{p=0}^{N-1}\sum_{l=0}^{T-1}\sum_{n=0}^{T-l-1}r^{-N(n+l+2)}|U_{N-p-1}(n)||Z_p(l)| \\
	&\quad+ \sum_{p=0}^{N-2}\sum_{q=0}^{N-p-2}\sum_{l=1}^{T}\sum_{n=0}^{T-l}r^{-N(n+l+1)}|U_q(n)||Z_p(l)| \\
	&\qquad+ \sum_{p=0}^{N-2}\sum_{q=0}^{N-p-2}\sum_{n=1}^{T}r^{-N(n+1)}|U_q(n)||Z_p(0)| 
	+ \sum_{p=1}^{N-1}\sum_{q=N-p}^{N-1}\sum_{l=0}^{T-1}\sum_{n=0}^{T-l-1}r^{-N(n+l+2)}|U_q(n)||Z_p(l)|.
	\end{align*}
The remainder of the calculation hinges on careful splitting and recombination of these sums, and by replacing 
$T-c$ by $T$ in various upper limits of summation. Successively, we estimate according to 
\begin{align*}
\lefteqn{
	\sum_{i=0}^{N-1}\sum_{n=0}^{T}r^{-N(n+1)}|Z_i(n)|}\\
	& \leq r^{-N}I + \sum_{p=0}^{N-2}\sum_{q=0}^{N-p-2}r^{-N}|U_q(0)||Z_p(0)| 
	+ \sum_{p=0}^{N-1}\sum_{l=0}^{T}\sum_{n=0}^{T}r^{-N(n+l+2)}|U_{N-p-1}(n)||Z_p(l)| \\
	&\quad+ \sum_{p=0}^{N-2}\sum_{q=0}^{N-p-2}\sum_{l=1}^{T}\sum_{n=0}^{T}r^{-N(n+l+1)}|U_q(n)||Z_p(l)| \\
	&\qquad+ \sum_{p=0}^{N-2}\sum_{q=0}^{N-p-2}\sum_{n=1}^{T}r^{-N(n+1)}|U_q(n)||Z_p(0)| 
	+ \sum_{p=1}^{N-1}\sum_{q=N-p}^{N-1}\sum_{l=0}^{T}\sum_{n=0}^{T}r^{-N(n+l+2)}|U_q(n)||Z_p(l)| \\
%\end{align*}	
%\begin{align*}	
	&= r^{-N}I + \sum_{p=0}^{N-2}\sum_{q=0}^{N-p-2}\sum_{n=0}^{T}r^{-N(n+1)}|U_q(n)||Z_p(0)| 
	+ \sum_{p=0}^{N-2}\sum_{q=0}^{N-p-2}\sum_{l=1}^{T}\sum_{n=0}^{T}r^{-N(n+l+1)}|U_q(n)||Z_p(l)| \\
	&\quad+ \sum_{l=0}^{T}\sum_{n=0}^{T}r^{-N(n+l+2)}|U_{N-1}(n)||Z_0(l)| 
	+ \sum_{p=1}^{N-1}\sum_{q=N-p-1}^{N-1}\sum_{l=0}^{T}\sum_{n=0}^{T}r^{-N(n+l+2)}|U_q(n)||Z_p(l)| \\
%\end{align*}
%\begin{align*}
	&= r^{-N}I + \sum_{p=0}^{N-2}\sum_{q=0}^{N-p-2}\sum_{l=0}^{T}\sum_{n=0}^{T}r^{-N(n+l+1)}|U_q(n)||Z_p(l)| 
	+ \sum_{l=0}^{T}\sum_{n=0}^{T}r^{-N(n+l+2)}|U_{N-1}(n)||Z_0(l)| \\
	&\quad+ \sum_{p=1}^{N-2}\sum_{q=N-p-1}^{N-1}\sum_{l=0}^{T}\sum_{n=0}^{T}r^{-N(n+l+2)}|U_q(n)||Z_p(l)| + \sum_{q=0}^{N-1}\sum_{l=0}^{T}\sum_{n=0}^{T}r^{-N(n+l+2)}|U_q(n)||Z_{N-1}(l)| \\
%\end{align*}
%\begin{align*}
	&\leq r^{-N}I + \sum_{p=0}^{N-2}\sum_{q=0}^{N-p-2}\sum_{l=0}^{T}\sum_{n=0}^{T}r^{-N(n+l+2)}|U_q(n)||Z_p(l)| %\\
	%&\quad
	+ \sum_{l=0}^{T}\sum_{n=0}^{T}r^{-N(n+l+2)}|U_{N-1}(n)||Z_0(l)| \\
	&\quad+ \sum_{p=1}^{N-2}\sum_{q=N-p-1}^{N-1}\sum_{l=0}^{T}\sum_{n=0}^{T}r^{-N(n+l+2)}|U_q(n)||Z_p(l)| 
	+ \sum_{q=0}^{N-1}\sum_{l=0}^{T}\sum_{n=0}^{T}r^{-N(n+l+2)}|U_q(n)||Z_{N-1}(l)| \\
	&= r^{-N}I +\left(\sum_{q=0}^{N-1}\sum_{n=0}^{T}r^{-N(n+1)}|U_q(n)|\right)
	\left(\sum_{p=0}^{N-1}\sum_{l=0}^{T}r^{-N(n+1)}|Z_p(l)|\right),
\end{align*}
where the last inequality holds as $1\leq r^{-N}$. Therefore by \eqref{equ:c3} %Summarising the above,
\begin{equation} \label{eq.Ttoinfty}
		\sum_{i=0}^{N-1}\sum_{n=0}^{T}r^{-N(n+1)}|Z_i(n)| 
		%&\leq r^{-N}I +\left(\sum_{j=0}^{N-1}\sum_{n=0}^{T}r^{-N(n+1)}|U_j(n)|\right)
	%\left(\sum_{i=0}^{N-1}\sum_{l=0}^{T}r^{-N(n+1)}|Z_i(l)|\right) \\
	\leq r^{-N}I +\left(\sum_{j=0}^{N-1}\sum_{n=0}^{\infty}r^{-N(n+1)}|U_j(n)|\right)
	\left(\sum_{i=0}^{N-1}\sum_{l=0}^{T}r^{-N(n+1)}|Z_i(l)|\right).
\end{equation} 
Due to condition \eqref{equ:c3}, we have that $\left(I-\sum_{j=0}^{N-1}\sum_{n=0}^{\infty}r^{-N(n+1)}|U_j(n)|\right)^{-1}$ exists and moreover is a non-negative matrix. Hence we have
\[
	\sum_{i=0}^{N-1}\sum_{n=0}^{T}r^{-N(n+1)}|Z_i(n)| \leq \left(I-\sum_{j=0}^{N-1}\sum_{n=0}^{\infty}r^{-N(n+1)}|U_j(n)|\right)^{-1}r^{-N}.
\]
Noting that each entry in the lefthand side of the above inequality is an increasing function of $T$ and is bounded above by a term which is independent of $T$, tells us that each entry of the matrix has a finite limit as $T\to\infty$. This proves the result. The inequality in the statement of the lemma follows by letting $T\to\infty$ in \eqref{eq.Ttoinfty}.
\end{proof}
%We note that Theorem \ref{thm:res}, will also hold when applied to \eqref{equ:resres} as opposed to \eqref{equ:res}.
The following corollary applies Theorem~\ref{thm:res} to \eqref{equ:convol}.
%We look at $\mathcal{W}(1)$ for simplicity.
%$lcm$ stands for least common multiple.
\begin{corollary}\label{thm:xsol}
	Let $\{X(n):n\in \N\}$ be the solution of \eqref{equ:convol}, $\{Z(n):n\in\N\}$ the solution of \eqref{equ:resres} and $\phi \in \mathcal{W}(r)$ and \eqref{equ:c2}, \eqref{equ:c3} hold. Let $\{\rho_l\}_{l=0}^{N-1}$ be given by Theorem \ref{thm:res} and $i\in\{0,1,...,N-1\}$.
	Suppose
	$\lim_{n\to\infty} f(Nn+i)/\phi(Nn)=L_i$. Then $\lim_{n\to\infty} X(Nn+i)/\phi(Nn)$ exists and can be calculated.
\end{corollary}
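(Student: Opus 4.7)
The approach is to extend the lifting argument from the proof of Theorem~\ref{thm:res} so as to carry the forcing term $f$ through, and then apply Theorem~\ref{thm:3} to the resulting lifted equation.

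First, I repeat the splitting of the convolution sum modulo $N$ (writing $k = Nl + p$ with $p\in\{0,\ldots,N-1\}$) that produced \eqref{eq:star}, now applied to \eqref{equ:convol} instead of \eqref{equ:resres}. The forcing $f(Nn+i)$ simply carries through unchanged to the right-hand side. Setting $X_i(n) := X(Nn+i)$ and $f_i(n) := f(Nn+i)$ and stacking these into vectors $\underline{X}(n),\underline{f}(n) \in \mathbb{R}^{Nd}$ yields
\begin{equation*}
\underline{X}(n) = \underline{f}(n) + B\,\underline{X}(n) + (J \ast \underline{X})(n-1), \qquad n \geq 1,
\end{equation*}
with exactly the same $B$ and $J$ as in \eqref{eq:defJ}. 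By Lemma~\ref{lemma:matinv}, this rearranges to $\underline{X}(n) = \underline{g}(n) + (F \ast \underline{X})(n-1)$, where $\underline{g} := (I-B)^{-1}\underline{f}$ and $F := (I-B)^{-1}J$ is \emph{precisely} the kernel already treated in the proof of Theorem~\ref{thm:res}.

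Second, I apply Theorem~\ref{thm:3} with weight $\Phi(n) := \phi(Nn) \in \mathcal{W}(r^N)$ (Lemma~\ref{lemma:wgt}), after the routine index shift needed to match the canonical form \eqref{equ:1a}. The spectral-radius bound $\rho\bigl(\sum_{i\geq 0}(r^N)^{-(i+1)}|F(i)|\bigr) < 1$ is \eqref{eq:rownorm} combined with \eqref{equ:linalg}, both already established in Lemma~\ref{lm:spec} under \eqref{equ:c3}, and $L_\Phi F$ is given by $(I-B)^{-1}$ times the matrix in \eqref{eq:deflimJ}. The new ingredient, $f(Nn+i)/\phi(Nn) \to L_i$, immediately gives $L_\Phi \underline{f}$ (the vector with blocks $L_i$) and hence $L_\Phi \underline{g} = (I-B)^{-1}L_\Phi \underline{f}$; the implicit summability $\tilde{\underline{g}}(r^N) < \infty$ required by \eqref{eq:thm3b} follows at once from $\underline{g}(n) = O(\Phi(n))$ together with $\tilde{\Phi}(r^N) < \infty$.

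Third, Theorem~\ref{thm:3} then supplies $L_\Phi \underline{X}$ explicitly via \eqref{eq:thm3b}, and the $(i{+}1)$-th block component of this vector is $\lim_{n\to\infty} X(Nn+i)/\phi(Nn)$. The remaining bookkeeping is confined to identifying the initial block $\underline{X}(0)$, obtainable from $X_0$ together with $f(0),\ldots,f(N-1)$ by setting $n=0$ in the lifted equation, and to the one-index shift of $\underline{g}$ produced when converting $\underline{X}(n) = \underline{g}(n) + (F\ast \underline{X})(n-1)$ into the form $z(n{+}1) = f(n) + \sum F(n-i)z(i)$ of \eqref{equ:1a}. I expect no substantive obstacle beyond what was already overcome for Theorem~\ref{thm:res}, since the essential analytical work (the spectral bound on $F$ and the identification of $L_\Phi F$) is recycled verbatim.
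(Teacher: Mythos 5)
Your proof is correct, but it follows a genuinely different route from the paper. The paper does not lift the perturbed equation \eqref{equ:convol} at all: it starts from the variation--of--constants representation \eqref{equ:resrep}, splits the convolution $\sum_j Z(n-j)f(j)$ modulo $N$ exactly as at the start of the proof of Theorem~\ref{thm:res} to write $X(Nn+i)$ as $Z(Nn+i)X(0)$ plus convolutions of the subsequences $Z_l$ and $F_a$, and then invokes the convolution--asymptotics result \cite[Thm.~4.3]{jaigdr:2006} in $\mathcal{W}(r^N)$, using the limits $\rho_l$ from Theorem~\ref{thm:res} and $L_i$ for $f$; this produces the fully explicit limit \eqref{eq:perlim} in terms of $\rho_l$, $L_i$ and the $Z$--transforms $\sum_j Z_l(j)r^{-Nj}$, $\sum_j F_a(j)r^{-Nj}$. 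You instead carry the forcing through the lifting that produced \eqref{eq:star}, obtain $\underline{X}(n)=\underline{g}(n)+(F\ast\underline{X})(n-1)$ with the same $F=(I-B)^{-1}J$, and apply Theorem~\ref{thm:3} directly with weight $\Phi$, recycling Lemma~\ref{lm:spec} for the spectral bound; the only new checks (existence of $L_\Phi\underline{g}$ after the one--index shift, which picks up a factor $r^N$ since $\Phi(m+1)/\Phi(m)\to r^N$, summability of the shifted forcing, and the identification of $\underline{X}(0)$) are handled or correctly flagged as routine. Your approach buys independence from the variation--of--constants formula and from the external convolution theorem, and it does not even need the conclusion of Theorem~\ref{thm:res} as an input (only its machinery), at the price of a less transparent limit expressed through $(r^NI-\tilde{F}(r^N))^{-1}$ rather than through the $\rho_l$; the paper's route yields the more explicit formula \eqref{eq:perlim}, which is what gets used in the examples of Section~\ref{sect:ex}. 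Since the corollary only asserts that the limit exists and can be calculated, either argument suffices.
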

\begin{remark}
Other results in the direction of Corollary~\ref{thm:xsol} are certainly possible to state in which the rate of decay of the perturbation is different to that of the kernel or where their periods differ. The proofs follow readily by the variation of constants formula
and the facts that (i) the convolution of two sequences which lie in $\mathcal{WP}(r,N)$ also lies in $\mathcal{WP}(r,N)$.
(ii) the sum of two  sequences in $\mathcal{WP}(r,N)$ is also in $\mathcal{WP}(r,N)$.
%(i) the convolution of two sequences which exhibit decay in $\mathcal{W}(r)$ with a periodic component will also exhibit decay in $\mathcal{W}(r)$ with a periodic component (ii) the sum of two sequences which exhibit decay in $\mathcal{W}(r)$ with a periodic component will also exhibit decay in $\mathcal{W}(r)$ with a periodic component. 
Therefore, we do not dwell on this issue but leave it instead to the reader's imagination to consider these obvious extensions.
%We conclude that if the kernel and perturbation of \eqref{equ:convol} are asymptotic to the product of a periodic sequence and a decaying sequence then the solution inherits similar asymptotic behaviour to the kernel and perturbation. Indeed the solution decays at the slower rate of either the kernel or perturbation. Also, if the periodic components of the kernel and perturbation have different periods, then the solution has period equal to the least common multiple of the two different periods of the kernel and perturbation.
%We conclude that if the kernel or perturbation of \eqref{equ:convol} can be written as a product of an asymptotically periodic sequence and a decaying sequence then the solution inherits the asymptotic behaviour. Indeed the solution decays at the slower rate of either the kernel or perturbation. Also, if the kernel and perturbation are asymptotically periodic with different periods, then the solution is asymptotically periodic with period equal to the least common multiple of the two different periods of the kernel and perturbation.
\end{remark}

\begin{proof}[Proof of Corollary~\ref{thm:xsol}]
By Theorem \ref{thm:res} we have $\lim_{n\to\infty} Z(Nn+i)/\phi(Nn)= \rho_i$. Using \eqref{equ:resrep} and the same argument at the start of the proof of  Theorem \ref{thm:res} we can write
	\begin{gather*}
		X(Nn+i) = Z(Nn+i)X(0) + \sum_{l=0}^{i}(Z_l*F_{i-l})(n) + \sum_{l=i+1}^{N-1}(Z_l*F_{N+i-l})(n-1),
	\end{gather*}
	where $f(0):=0$, $Z_a(b):= Z(Nb+a)$ and $F_a(b) := f(Nb+a)$, $a\in\{0,1,\ldots,N-1\},b\in\Z^{+}$.
	Define $\Phi(n) = \phi(Nn)$. Using \cite[Thm:4.3]{jaigdr:2006} and $\Phi\in\mathcal{W}(r^N)$ we obtain
	\begin{align}\label{eq:perlim}
	\lim_{n\to\infty}\frac{X(Nn+i)}{\phi(Nn)} &= \rho_i X(0) + \sum_{l=0}^{i}\rho_l\sum_{j=0}^{\infty}F_{i-l}(j) r^{-Nj}
		+ \sum_{l=0}^{i}\sum_{j=0}^{\infty}Z_l(j) r^{-Nj} L_{i-l} \notag \\
	&\qquad + \sum_{l=i+1}^{N-1}\rho_l\sum_{j=0}^{\infty}F_{N+i-l}(j)r^{-N(j+1)}
		+ \sum_{l=i+1}^{N-1}\sum_{j=0}^{\infty}Z_l(j) r^{-N(j+1)} L_{N+i-l}.
%	= \rho_{i}h(0) + \sum_{l=0}^{i}\rho_l\sum_{j=0}^{\infty}f(jN+i-l)
%	+ \sum_{l=i+1}^{N-1}\rho_l\sum_{j=0}^{\infty}f(jN+N+i-l) + L\sum_{j=0}^{\infty}z(j).
	\end{align}
which completes the proof.
\end{proof}
%\begin{remark}
%	Regarding the limit formula of \eqref{eq:perlim}, we note that the infinite sum terms are finite as the summands are asymptotic to sequences in $\mathcal{W}(r^{N})$, \cite[Prop.4.2]{jaigdr:2006}.
%\end{remark}
We close this section by noting that $Z\in \mathcal{W}(r,N)$ is in some sense only possible 
if $U\in \mathcal{W}(r,N)$. This result is a consequence of Theorem~\ref{thm:res} and Corollary~\ref{thm:xsol}. 

We note that one may show, via induction, that the solution $Z$ of \eqref{equ:resres} can be expressed as $Z(n)= U(n-1) + \sum_{j=2}^{n}U^{(*j)}(n-j)$, for $n\geq2$, with $Z(1)=U(0)$, $Z(0)=I$. Furthermore this representation allows one to show that $Z$ is also a solution of the equation $W(n+1)=(W*U)(n)$, $n\geq0$, $W(0)=I$. Hence
$(U*Z)(n) = Z(n+1) = W(n+1) = (W*U)(n) = (Z*U)(n)$. 
By rewriting \eqref{equ:resres}, we get $U(n+1)=Z(n+2)-\sum_{j=1}^{n+1} U(n+1-j)Z(j)$ for $n\geq 0$. 
Putting $Y(n)=-Z(n+1)$ we see that 
\begin{equation} \label{eq.convolconverse}
U(n+1)=-Y(n+1)+\sum_{l=0}^n U(n-l)Y(l), \quad n\geq 0.
\end{equation}
We now argue that $(U*Y)=(Y*U)$. For $n\geq0$ we have
\begin{align*}
	(U*Y)(n) = -\sum_{j=0}^{n}U(n-j)Z(j+1) = -\sum_{j=0}^{n}U(n-j)(U*Z)(j)= -(U*U*Z)(n).
\end{align*}
Similarly $(Y\ast U)(n) = -(U*Z*U)(n)$. But $Z*U=U*Z$, so $(U*Y)(n)=-(U*U*Z)(n)= -(U*(Z*U))(n)=(Y*U)(n)$. Therefore \eqref{eq.convolconverse} becomes
\begin{equation} \label{eq.convolconverse2}
U(n+1)=-Y(n+1)+\sum_{l=0}^n Y(n-l)U(l), \quad n\geq 0.
\end{equation}
which is in the form of \eqref{equ:convol}. We introduce the resolvent $R$ by 
$R(n+1)=\sum_{j=0}^n Y(n-j)R(j)$ for $n\geq 0$, where $R(0)=I$.
%By the variation of constants formula we have 
%\[
%U(n)=R(n)U(0)+\sum_{j=1}^n R(n-j)\cdot -W(j), \quad n\geq 0.
%\]
We now give conditions under which Theorem~\ref{thm:res} can be applied. If we suppose that $Z$ obeys 
\eqref{eq:reslimit}, then for $i=0,\ldots,N-1$ we have
\begin{equation} \label{eq.WinWrper}
\lim_{n\to\infty} \frac{Y(Nn+i)}{\phi(Nn)}=-\lim_{n\to\infty}\frac{Z(Nn+i+1)}{\phi(Nn)}
=\begin{cases}
-\rho_{(i+1)}, & i=0,\ldots, N-2\\
-r^N \rho_0, & i=N-1.
\end{cases}
\end{equation}
Moreover, the condition 
\begin{equation}\label{equ:c4}
	\max_{1\leq p\leq d} \left(\sum_{q=1}^{d} \sum_{i=0}^{N-1}\sum_{l=0}^{\infty}r^{-N(l+1)}|Z(Nl+i+1)|_{p,q} \right) < 1 , \quad r \leq 1,
\end{equation}
is equivalent to 
\begin{equation*}
	\max_{1\leq p\leq d} \left(\sum_{q=1}^{d} \sum_{i=0}^{N-1}\sum_{l=0}^{\infty}r^{-N(l+1)}|Y(Nl+i)|_{p,q} \right) < 1 , \quad r \leq 1,
\end{equation*}
and by applying Theorem~\ref{thm:res} with $Y$ in the role of $U$ and $R$ in the role of $Z$, there exist $D_i\in \mathbb{R}^{d\times d}$ for $i=0,\ldots, N-1$ such that
	$\lim_{n\to\infty} R(Nn+i)/\phi(Nn) =: D_i$. 
Using this limit in conjunction with \eqref{eq.WinWrper}, we may now apply Corollary \ref{thm:xsol} to \eqref{eq.convolconverse2} to deduce that there exist $A_i\in \mathbb{R}^{d\times d}$ for $i=0,\ldots, N-1$ such that  
$\lim_{n\to\infty} U(Nn+i)/\phi(Nn) =: A_i$. 
However we would rather replace \eqref{equ:c4} with a norm condition on $U$ (see \eqref{equ:c5} below) which must be  stronger than \eqref{equ:c3}, as this would then yield a converse with conditions closer to that of Theroem~\ref{thm:res}. By virtue of the discussion above, what remains to be proved in the converse below is that \eqref{equ:c5} implies \eqref{equ:c4}. 
%Therefore, we have proven the following result, a partial converse of Theorem~\ref{thm:res}.
\begin{theorem}\label{thm:rescon}
	Let $\{Z(n), n\in \N\}$ be the sequence which satisfies \eqref{equ:resres}. Suppose that $Z\in \mathcal{WP}(r,N)$ with weight function $\phi$ in $\mathcal{W}(r)$ so that there is a sequence of $d\times d$ matrices $\{\rho_{i}\}_{i=0}^{N-1}$ and 
\begin{equation*} 
	  \lim_{n\to\infty}\frac{1}{\phi(Nn)}Z(Nn+i)= \rho_i ,\quad	 i\in\{0,1,2,...,N-1\}.  %u(n) \geq 0, \forall n\in\N \\
\end{equation*}
Also suppose  
\begin{equation}\label{equ:c5}
	\max_{1\leq p\leq d} \left(\sum_{q=1}^{d} \sum_{i=0}^{N-1}\sum_{l=0}^{\infty}r^{-N(l+1)}|U(Nl+i)|_{p,q} \right) 
	< \frac{1}{1+r^{-N}},
	\quad r \leq 1,
\end{equation}
holds for some $N\in\N$. Then $U\in \mathcal{WP}(r,N)$ with weight function $\phi$ i.e., there exists $\{A_i \} \in\R^{d\times d}$ such that
\begin{equation*} 
	\lim_{n\to\infty}\frac{1}{\phi(Nn)}U(Nn+i) =A_i, \quad	 i\in\{0,1,2,...,N-1\}.
\end{equation*}
\end{theorem}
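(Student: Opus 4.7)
The plan is to follow the blueprint set out in the paragraphs preceding the statement. Writing $Y(n)=-Z(n+1)$, the paper has already established that $U$ satisfies the Volterra equation \eqref{eq.convolconverse2} with kernel $Y$, and that the weighted periodic limits of $Y$ are given by \eqref{eq.WinWrper}. Thus if we can apply Theorem~\ref{thm:res} with $Y$ in the role of the kernel and the resolvent $R$ of \eqref{eq.convolconverse2} in the role of $Z$, we obtain $R(Nn+i)/\phi(Nn)\to D_i$; Corollary~\ref{thm:xsol} applied to \eqref{eq.convolconverse2} (with forcing $-Y$) then yields $U(Nn+i)/\phi(Nn)\to A_i$, which is the desired conclusion. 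The limit hypotheses needed for Theorem~\ref{thm:res} applied to $(Y,R)$ are already in hand from \eqref{eq.WinWrper}, so what truly remains is to verify the norm bound \eqref{equ:c4}, which, using $|Y(Nl+i)|=|Z(Nl+i+1)|$, is exactly the hypothesis \eqref{equ:c3} required for the kernel $Y$.

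For brevity, introduce the non-negative $d\times d$ matrices
\begin{align*}
	\mathcal{U} &:= \sum_{i=0}^{N-1}\sum_{l=0}^\infty r^{-N(l+1)}|U(Nl+i)|,&
	\mathcal{Z} &:= \sum_{i=0}^{N-1}\sum_{l=0}^\infty r^{-N(l+1)}|Z(Nl+i)|,\\
	\mathcal{Z}' &:= \sum_{i=0}^{N-1}\sum_{l=0}^\infty r^{-N(l+1)}|Z(Nl+i+1)|,&
	S_0 &:= \sum_{l=0}^\infty r^{-N(l+1)}|Z(Nl)|,
\end{align*}
so that \eqref{equ:c5} reads $\|\mathcal{U}\|_\infty<(1+r^{-N})^{-1}$ and the target \eqref{equ:c4} is $\|\mathcal{Z}'\|_\infty<1$. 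Since \eqref{equ:c5} plainly implies \eqref{equ:c3}, Lemma~\ref{lm:sumZ} applies and produces the entrywise matrix inequality $\mathcal{Z}\leq r^{-N}I+\mathcal{U}\mathcal{Z}$; taking $\|\cdot\|_\infty$ (using non-negativity and submultiplicativity) and solving for $\|\mathcal{Z}\|_\infty$ yields
\[
\|\mathcal{Z}\|_\infty \;\leq\; \frac{r^{-N}}{1-\|\mathcal{U}\|_\infty} \;<\; \frac{r^{-N}}{r^{-N}/(1+r^{-N})} \;=\; 1+r^{-N}.
\]

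The last ingredient is the matrix bookkeeping which relates $\mathcal{Z}'$ to $\mathcal{Z}$. Splitting the defining sum of $\mathcal{Z}'$ into the contributions $i\in\{0,\ldots,N-2\}$ and $i=N-1$, shifting by $m=l+1$ in the latter, and using $Z(0)=I$, one arrives at the identity $\mathcal{Z}'=\mathcal{Z}+(r^N-1)S_0-I$. Because $r\leq 1$ forces $r^N-1\leq 0$ and the $l=0$ summand gives $S_0\geq r^{-N}I$, this collapses to the entrywise bound $\mathcal{Z}'\leq\mathcal{Z}-r^{-N}I$. Both sides are non-negative and every diagonal entry of $\mathcal{Z}$ is at least $r^{-N}$, so a direct row-sum calculation gives $\|\mathcal{Z}'\|_\infty\leq\|\mathcal{Z}\|_\infty-r^{-N}<1$, which is precisely \eqref{equ:c4}. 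The main (modest) obstacle is this matrix bookkeeping: the identity $\mathcal{Z}'-\mathcal{Z}=(r^N-1)S_0-I$ together with the sign reversal coming from $r\leq 1$ produces exactly the $r^{-N}I$ correction that cancels the excess $r^{-N}$ in the Lemma~\ref{lm:sumZ} bound on $\|\mathcal{Z}\|_\infty$. Once this cancellation is in place, the blueprint from the first paragraph completes the proof.
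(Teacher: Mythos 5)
Your proposal is correct and follows essentially the same route as the paper: it reduces the theorem, via the $Y$--$R$ construction and Corollary~\ref{thm:xsol}, to showing that \eqref{equ:c5} implies \eqref{equ:c4}, and then uses Lemma~\ref{lm:sumZ} together with the reindexing of the shifted sum (exploiting $Z(0)=I$ and $r\leq 1$) to gain the $r^{-N}$ correction and conclude from $\|\mathcal{U}\|_{\infty}<1/(1+r^{-N})$. The only differences are presentational: you derive the exact identity $\mathcal{Z}'=\mathcal{Z}+(r^N-1)S_0-I$ and solve the norm inequality directly, where the paper estimates the regrouped sum and bounds $\|A\|_\infty$ via the Neumann series for $(I-B)^{-1}$.
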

\begin{remark}
	In the special case where there is no periodicity (i.e., $N=1$) the necessary and sufficient nature of Theorems~\ref{thm:res} and \ref{thm:rescon} is an improvement on the sufficient nature of the conditions of Theorem~\ref{thm:3}.
\end{remark}
%\begin{remark}
%	A notable difference between Theorem~\ref{thm:res} and Theorem~\ref{thm:3} is that (provided $L_{\gamma}F\not=0)$ one may consider $z$ and $F$ to decay at the same rate,  whereas in Theorem~\ref{thm:res} it is possible to have some terms in the sequence $\{A_i\}_{i=0}^{N-1}$ being zero while the corresponding terms in the sequence $\{\rho_i\}_{i=0}^{N-1}$ are non-zero. Hence one could not in this instance have $Z$ and $U$ decaying at the same rate.
%\end{remark}
\begin{proof}
	We show that \eqref{equ:c5} inplies \eqref{equ:c4}. Regrouping the terms in \eqref{equ:c4}, one deduces
	\begin{align*}
		&\max_{1\leq p\leq d} \left(\sum_{q=1}^{d} \sum_{i=0}^{N-1}\sum_{l=0}^{\infty}r^{-N(l+1)}|Z(Nl+i+1)|_{p,q} \right) \notag\\
		&\quad = \max_{1\leq p\leq d} \sum_{q=1}^{d} \left( \sum_{j=1}^{N-1}r^{-N}|Z(j)|_{p,q} 
		+\sum_{j=1}^{N-1}\sum_{l=1}^{\infty}r^{-N(l+1)}|Z(Nl+j)|_{p,q} + \sum_{l=1}^{\infty}r^{-Nl}|Z(Nl)|_{p,q} \right). % \notag\\
		\end{align*}
		Hence, using $1\leq r^{-N}$,
		\begin{align}
		&\max_{1\leq p\leq d} \left(\sum_{q=1}^{d} \sum_{i=0}^{N-1}\sum_{l=0}^{\infty}r^{-N(l+1)}|Z(Nl+i+1)|_{p,q} \right) \notag\\	
		&\quad \leq \max_{1\leq p\leq d} \sum_{q=1}^{d} \left( \sum_{j=1}^{N-1}r^{-N}|Z(j)|_{p,q} 
		+\sum_{j=1}^{N-1}\sum_{l=1}^{\infty}r^{-N(l+1)}|Z(Nl+j)|_{p,q} + \sum_{l=1}^{\infty}r^{-N(l+1)}|Z(Nl)|_{p,q} \right) \notag\\
		&\quad = \max_{1\leq p\leq d} \sum_{q=1}^{d} \left(\sum_{j=0}^{N-1}\sum_{l=0}^{\infty}r^{-N(l+1)}|Z(Nl+j)|_{p,q} 
		-r^{-N}|Z(0)|_{p,q}  \right) \notag\\
		&\quad = \max_{1\leq p\leq d} \sum_{q=1}^{d} \left(\sum_{j=0}^{N-1}\sum_{l=0}^{\infty}r^{-N(l+1)}|Z(Nl+j)|_{p,q} \right) 
		-r^{-N},  \label{eq:normZ}
	\end{align}
	with the last equality holding as $Z(0)=I$, whose rows sum to one, which is independent of $p$. Define the matrices
	$A=\sum_{i=0}^{N-1}\sum_{n=0}^{\infty}r^{-N(n+1)}|Z(Nn+i)|$ and $B=\sum_{i=0}^{N-1}\sum_{n=0}^{\infty}r^{-N(n+1)}|U(Nn+i)|$. Then Lemma~\ref{lm:sumZ} gives
 $A\leq r^{-N}I + BA$ or equivalently
	$A\leq(I-B)^{-1}r^{-N}$, with the direction of the inequality being preserved due to $B\geq0$ and the expression $(I-B)^{-1}=\sum_{l=0}^{\infty}B^l$, which is valid due to \eqref{equ:c5}. Taking the infinity norm on both sides of this inequality gives
	\[
		\left\| A  \right\|_{\infty} \leq \, \left\|\sum_{l=0}^{\infty}B^l\right\|_{\infty} r^{-N} 
		\leq  r^{-N}\sum_{l=0}^{\infty}\left\| B^l\right\|_{\infty} 
		\leq  r^{-N}\sum_{l=0}^{\infty}\| B\|_{\infty}^l = r^{-N}\frac{1}{1-\|B\|_{\infty}}.
	\]
	Combining this with \eqref{eq:normZ} gives
	\[
		\max_{1\leq p\leq d} \left(\sum_{q=1}^{d} \sum_{i=0}^{N-1}\sum_{l=0}^{\infty}r^{-N(l+1)}|Z(Nl+i+1)|_{p,q} \right)
		\leq \,	\| A  \|_{\infty} -r^{-N} \leq r^{-N}\frac{1}{1-\| B\|_{\infty}} -r^{-N}.
	\]
	Thus if  	%\begin{equation}\label{eq:ine}
		$r^{-N}/(1-\| B\|_{\infty}) -r^{-N}<1$ 
	%\end{equation}
	we have our result. But this inequality is equivalent to %Rearrangement of \eqref{eq:ine} gives 
		$\|B\|_{\infty} < 1/(1+r^{-N})\leq1/2<1$, 	which is true by hypothesis.
\end{proof}

\section{Examples}\label{sect:ex}
	We provide an application of the above theory to analysing the memory characteristics of auto--regressive conditional heteroskedastic processes of order infinity. We briefly give some background details pertaining to the memory properties of ARCH($\infty$) processes, see \cite{Engle:1995,lgpkrl:2000,pkrl:2000,Zaffaroni:2004} for more detail.
	\begin{definition}  \label{def:arch}
	A random sequence $\{X(k),k\in\Z\}$ is said to satisfy ARCH($\infty$) equations if there is a sequence of independent and identically distributed (i.i.d.) nonnegative random variables $\{\xi(k),k\in \Z\}$ such that
		\begin{gather} \label{eq:1a} \tag{ARCH}
			X(k) = \biggl(a + \sum_{j=1}^{\infty}b(j)X(k-j)\biggr)\xi(k),
%			X(k) = \sigma(k)\xi(k), \quad \sigma(k) = a + \sum_{j=1}^{\infty}b(j)X(k-j),
		\end{gather}
	where $a\geq0$, $b(j)\geq0$, for $j = \{1,2,...\}$.
	\end{definition}
The condition
	\begin{equation}\label{eq:con1} %\tag{GLK1}
		\mathbb{E}[\xi(0)]\sum_{j=1}^{\infty}b(j)<1,
	\end{equation}
	is imposed in \cite{pkrl:2000} to show the presence of a strictly stationary solution of \eqref{eq:1a}. While the condition
\begin{equation}\label{eq:con2} %\tag{GLK2}
	\mathbb{E}[\xi(0)^2]^{\frac{1}{2}}\sum_{j=1}^{\infty}b(j)<1.
\end{equation}
is shown in \cite{lgpkrl:2000} to imply a unique weakly stationary solution in the class of all stationary solutions with finite second moment, it is further shown in \cite{lgpkrl:2000} that \eqref{eq:con2} implies the positivity and absolute summability of the autocovariance function of stationary solutions of \eqref{eq:1a} (ie. long memory is ruled out).	
	
	Moreover \cite{lgpkrl:2000} establishes a moving average representation for \eqref{eq:1a}. It is remarked in \cite[pp.16]{lgpkrl:2000} and \cite[pp.154]{Zaffaroni:2004} that it is the asymptotic behaviour of the coefficients in this moving average representation which impart the rate of decay of the auto--covariance function of \eqref{eq:1a}. The precise influence of these coefficients is the subject of a result in \cite{Zaffaroni:2004}. We give the set up of this theorem; let $\psi(L) = 1 - \mathbb{E}[\xi(0)]\sum_{j=1}^{\infty}b(j)L^j$, where $L$ is the lag operator (i.e. $L(X(k))=X(k-1)$) and define $\nu(k) :=X(k) - \mathbb{E}[\xi(0)]\sigma(k)$, where $\nu$ is a martingale difference sequence, ie. $\mathbb{E}[\nu(k)|\mathcal{F}_{k-1}]=0$ and $\mathcal{F}_{k-1}$ is the $\sigma$-algebra generated by $\xi_{k-1},\xi_{k-2},...$. Then from \eqref{eq:1a} we have
$\psi(L)X(k) = a\mathbb{E}[\xi(0)] + \nu(k)$. Assuming the invertibility condition 
(\cite{Zaffaroni:2004}, \cite[Lm.4.1]{lgpkrl:2000}),
%that $\delta$ obeys
\begin{equation}\label{eq:delta}
 \exists\text{ } D(z) = \sum_{j=0}^{\infty}\delta(j)z^j = \frac{1}{\psi(z)}, \quad \delta(0)=1, 
 \quad \text{ for all } |z|\leq1 \quad \text{such that } \sum_{j=0}^{\infty}\delta^2(j)<\infty,
\end{equation}
%suficient conditions for which are discused in \cite{Zaffaroni:2004} and \cite[Lm:4.1]{lgpkrl:2000}, 
then
\begin{equation}
	X(k) = a\mathbb{E}[\xi(0)]\sum_{j=0}^{\infty}\delta(j) + \sum_{j=0}^{\infty}\delta(j)\nu(k-j).
\end{equation}
%	Suficient conditions for \eqref{eq:delta} to hold are discused in \cite{Zaffaroni:2004} and \cite[Lm:4.1]{lgpkrl:2000}.
	Conditions for weak stationary are  examined in \cite{Zaffaroni:2004}. In particular the following condition
	is weaker than \eqref{eq:con2}:
\begin{equation}\label{eq:con3} %\tag{ZAFF}
	\mathbb{E}[(\xi(0)-\mathbb{E}[\xi(0)])^2]\sum_{u=-\infty}^{\infty}\chi_{\delta}(u)\chi_{b^*}(u)<1.
\end{equation}
Here $\chi_{c}(u) := \sum_{j=0}^{\infty}c(j)c(j+|u|)$ for any sequence $c\in\ell^2(\Z^{+})$, $b^*(j)=0$ if $j=0$, and $b^*(j) = b(j)$ otherwise. \eqref{eq:con3} implies absolute summability of the auto--covariance function, so both 
\eqref{eq:con3} and \eqref{eq:con2} rule out long memory. 
	In particular \cite[Thm.1]{Zaffaroni:2004} shows that the auto--covariance function of \eqref{eq:1a} obeys $\text{Cov}[X(k),X(k+u)]=C\chi_{\delta}(u)$ for some $0<C<\infty$, where $\chi_{\delta}(u)=\sum_{j=0}^{\infty}\delta(j)\delta(j+|u|)$, $u\in\{0,\pm1,...\}$. Regarding \cite[Thm.2]{Zaffaroni:2004}, we demonstrate some flaws concerning the asymptotic decay of the autocovariance function following that of $\delta$.
%While Zaffaroni's conditions for weak stationarity are optimal, the asymptotic result needs modification, in particular the decay structure of the kernel ( ie. the $b$ terms) needs to be more clearly defined.	

	\cite[Thm.2]{Zaffaroni:2004} asserts that if there exists a function $\delta$, defined according to \cite[pp.149]{Zaffaroni:2004}, and \eqref{eq:con1} and
\begin {equation} \label{eq:slower}
	\lim_{k\to\infty}\frac{b(k)}{\zeta^k} =\infty, \quad \text{ for any } 0<\zeta<1,
\end{equation}
hold, then
\begin{align} \label{eq:dasp}
%	\delta(k) &\sim C_0 b(k) \text{ and }
	\chi_{\delta}(k) \sim C_1 b(k), \quad (k\to\infty),
\end{align}
for some $0<C_1<\infty$, with $c(x)\sim d(x)$ as $x\to x_0$, meaning that $c(x)/d(x)\to 1$.

In the forth--coming paper, \cite{jajd}, it is shown that $\delta$ satisfies the following equation,
\begin{equation}\label{eq:deltadiff}
	\delta(n) =
		\mathbb{E}[\xi(0)]\sum_{j=0}^{n-1}b(n-j)\delta(j),	 \quad n\geq 1, \quad \delta(0)=1.
\end{equation}
Indeed one can think of $\delta$ as a resolvent for a Volterra equation, derived in \cite{jajd}, which is satisfied by the auto--covariance function of the ARCH($\infty$) process.

We consider the sufficiently simple case of a scalar Volterra equation where the kernel has a `two--periodic' ($N=2$) component. We believe that this example is instructive in demonstrating the complexity of the calculations for higher $d$ or $N$, while retaining results which are eminently verifiable.

The idea of the example is that if $b$ obeys \eqref{eq:slower} and also contains a periodic component then $\chi_{\delta}$ will have a similar rate of decay to $b$ but their periodic components will not be in phase and hence $b\not\sim \chi_\delta$.
 Our first illustration of the theory deals with the ratio of $\delta/\phi$; the second uses this result to analyse $\chi_{\delta}/\phi$.
%	Take from Mathematica file entitled: periodic_convol_volterra_v1.21_final
\begin{example}\label{ex:delsimb}
%From \eqref{eq:deltadiff}, we can write
%\begin{subequations}\label{eq:delvol}
%\begin{align*}
%	& \d(n+1) = \sum_{j=0}^{n}\b(n-j)\d(j), \quad \delta(0)=1, \quad \text{ where } \b(n) :=\l b(n+1). \\
%	&
%\end{align*}
%\end{subequations}
\end{example}
	We can take $\l:=\mathbb{E}[\xi(0)]>0$ as if $\l=0$ then $\xi(n)=0$ for all $n\in\Z^+$. Let $\l b(2n+i+1)/\phi(2n)\to a_i>0$ for $i\in\{0,1\}$, for some $\phi\in\mathcal{W}(1)$ and $a_0\not=a_1$. Let \eqref{eq:con1} hold. Observing that \eqref{eq:deltadiff} is of the form of \eqref{equ:resres}, we apply Theroem~\ref{thm:res} to \eqref{eq:deltadiff} giving,
\begin{align*}
	d_0 := \lim_{n\to\infty}\frac{\d(2n)}{\phi(2n)} = a_0T_0 + a_1T_1, \qquad
	d_1 := \lim_{n\to\infty}\frac{\d(2n+1)}{\phi(2n)} = a_1T_0 + a_0T_1,
\end{align*}
where
%\begin{align*}
	$T_0 = \Lambda(2S_0(1-S_1)),$	$T_1 = \Lambda(S_0^2 + (1-S_1)^2)$,
	$\Lambda = \bigl((1-S_1)^2-S_0^2\bigr)^{-2}$ and $S_i = \l\sum_{j=0}^{\infty}b(2j+i+1)$.
%\end{align*}
%\end{example}
%\begin{remark}
%	Note that if $T_0=0$ then $S_0=0$ and hence (as $b\geq0$), $b(2j+1)=0$ forall $j\geq0$, (indeed $a_0=0$). Thus $\lim_{n\to\infty}\delta(n)/b(n)$ does not exist (ie. $\delta\not\sim b$). Thus $\delta\sim b$ can be achieved if and only if $a_0=a_1$ (ie. there is no periodic component, this is purely an application of \cite{jaigdr:2006}).
%	We also remark that in order to achieve $\delta\sim\phi$ (or $d_0=d_1$) one might consider $T_0=T_1$, this however leads to $S_0+S_1=1$, ie. a contradiction of \eqref{eq:con1}. Hence generically $\delta\not\sim\phi$.
%\end{remark}
%\begin{remark}
%	Of particular note in Example~\ref{ex:delsimb} is that we have \eqref{eq:con1} holds and $b(i)/\xi^i\to\infty$ as $i\to\infty$, for any $0<\xi<1$. However this does not, generally, lead to $\delta(n)\sim c\, b(n)$, for some $c>0$ (ie. a contradiction of a result in the proof of Theroem~2 in \cite[pp.160]{Zaffaroni:2004}). We now establish/illustrate a flaw in the result $\chi_{\delta}(u)\sim Cb(u)$ as $u\to\infty$, for some $0<C<\infty$.
%\end{remark}
\begin{remark}
	In order to achieve $\delta\sim\phi$ (or $d_0=d_1$) one might consider $T_0 = T_1$, this however leads to $S_0 + S_1 = 1$, ie. a contradiction of \eqref{eq:con1}. Hence in general $\delta$ is not asymptotic to $\phi$. 
\end{remark}

\begin{remark}\label{rk:delnum}	% Figures calculated/verified by Mathematica file entitled: limsARCH
We provide a numerical illustration where all of the limits in Example~\ref{ex:delsimb} may be computed explicitly.
%We remark that all of the limits in Example~\ref{ex:delsimb} can be computed numerically. To illustrate this we give a numerical example. 
Define $\phi(n)=n^{-2}$ for all $n\geq1$ and $\phi(0)=2$. Let $b(j) =a_1j^{-2}$ for $j/2\in\N$, $b(j) =a_0j^{-2}$ for $j/2\not\in\N$, where $a_0:=0.5$ and $a_1:=0.25$. Furthermore let $\{\xi(n)\}_{n\in\Z}$ be an i.i.d. non--negative stochastic process with mean equal to unity (ie. $\l=1$). Thus it is calculated that
\begin{align*} % Numerics found via Foruier Series (tut4. Q1.) or Mathematica
	S_0 = a_0\l\sum_{j=0}^{\infty}\frac{1}{(2j+1)^2} = \frac{\pi^2}{16}, \quad
	S_1 = a_1\l\sum_{j=0}^{\infty}\frac{1}{2^{2}(j+1)^{2}} = \frac{\pi^2}{96}.
\end{align*}
Noting that $S_0+S_1<1$, one can evaluate $\Lambda, T_0$ and $T_1$ respectively and hence $d_0$ and $d_1$. Indeed
$\Lambda = 5.55073...$, $T_0 = 6.14391...$ and $T_1 = 6.58015...$, which gives $d_0 = 4.71699...$ and $d_1 = 4.82605....$
\end{remark}

\begin{example}\label{ex:acfsimb}
\end{example}
	 We show that while it is possible to have \eqref{eq:slower} one need not have \eqref{eq:dasp}. 
%	 Following on from Example~\ref{ex:delsimb}. 	
	 We proceed with the same set up as in Example~\ref{ex:delsimb}, noting that \eqref{eq:slower} is satisfied.
	 Let $\phi$ be asymptotic to a decreasing sequence. Now observe,
\begin{align*}
	\chi_{\delta}(2u) &= \sum_{j=0}^{\infty}\delta(2(j+u))\delta(2j) + \sum_{j=0}^{\infty}\delta(2(j+u)+1)\delta(2j+1), \\
	\chi_{\delta}(2u+1) &= \sum_{j=0}^{\infty}\delta(2(j+u)+1)\delta(2j) + \sum_{j=0}^{\infty}\delta(2(j+u+1))\delta(2j+1).
\end{align*}
Thus for some sufficiently large positive integer $M$, we have
\begin{align*}
	\frac{\chi_{\delta}(2u)}{\phi(2u)} &= \sum_{j=0}^{M}\frac{\delta(2(j+u))}{\phi(2u)}\delta(2j)
	+ \sum_{j=0}^{M}\frac{\delta(2(j+u)+1)}{\phi(2u)}\delta(2j+1) \\
	&\quad + \sum_{j=M+1}^{\infty}\frac{\delta(2(j+u))}{\phi(2u)}\delta(2j)
	+ \sum_{j=M+1}^{\infty}\frac{\delta(2(j+u)+1)}{\phi(2u)}\delta(2j+1).
\end{align*}
For the third sum, recalling that $\delta\in\ell^1(\Z^+)$ as \eqref{eq:con1} holds,
\begin{align*}
	 \sum_{j=M+1}^{\infty}\frac{\delta(2(j+u))}{\phi(2u)}\delta(2j) &= \sum_{j=M+1}^{\infty}\frac{\delta(2(j+u))}{\phi(2(j+u))}\frac{\phi(2(j+u))}{\phi(2u)}\delta(2j)
	 \leq 4\,d_0\sum_{j=M+1}^{\infty}\delta(2j).
\end{align*}
The fourth sum can be treated similarly. Recalling the non-negativity of $\delta$, we have
\[
	\lim_{M\to\infty}\lim_{u\to\infty}\sum_{j=M+1}^{\infty}\frac{\delta(2(j+u))}{\phi(2u)}\delta(2j)
	=\lim_{M\to\infty}\lim_{u\to\infty}\sum_{j=M+1}^{\infty}\frac{\delta(2(j+u)+1)}{\phi(2u)}\delta(2j+1) =0.
\]
For the first sum we see
\[	\lim_{M\to\infty}\lim_{u\to\infty}\sum_{j=0}^{M}\frac{\delta(2(j+u))}{\phi(2u)}\delta(2j)
	= \lim_{M\to\infty}d_0\sum_{j=0}^{M}\delta(2j) = d_0\sum_{j=0}^{\infty}\delta(2j),
\]
and a similar calculation applies to the second sum. Thus, after a similar analysis of $\chi_{\delta}(2u+1)$ we have
\begin{align*}
	\lim_{u\to\infty}\frac{\chi_{\delta}(2u)}{\phi(2u)} &= d_0\sum_{j=0}^{\infty}\delta(2j) + d_1\sum_{j=0}^{\infty}\delta(2j+1) = a_0\tau_0+a_1\tau_1, \\
	\lim_{u\to\infty}\frac{\chi_{\delta}(2u+1)}{\phi(2u)} &= d_1\sum_{j=0}^{\infty}\delta(2j) + d_0\sum_{j=0}^{\infty}\delta(2j+1) = a_0\tau_1+a_1\tau_0,
\end{align*}
where
\[
	\tau_0 = T_0\sum_{j=0}^{\infty}\delta(2j) + T_1\sum_{j=0}^{\infty}\delta(2j+1), \quad
	\tau_1 = T_1\sum_{j=0}^{\infty}\delta(2j) + T_0\sum_{j=0}^{\infty}\delta(2j+1).
\]
Thus for $\chi_{\delta}\sim b$ we need
$\lim_{u\to\infty} \chi_{\delta}(2u)/b(2u) =\lim_{u\to\infty} \chi_{\delta}(2u+1)/b(2u+1)$,
which is equivalent to $\tau_0(a_0-a_1)(a_0+a_1)/(a_0a_1)=0$, 
which can only occur if either $a_0=a_1$ or $\tau_0=0$. The first is ruled out by hypothesis. For the second, summing over \eqref{eq:deltadiff} for both $\delta(2n)$ and $\delta(2n+1)$ gives
\[
	\sum_{j=0}^{\infty}\delta(2j)= \frac{(1-S_1)}{(1-S_1)^2-S_0^2}, \quad \sum_{j=0}^{\infty}\delta(2j+1)=\frac{S_0}{(1-S_1)^2-S_0^2},
\]
which gives $\tau_0$ the representation
\[
	\tau_0 = \frac{\Lambda S_0(S_0^2+3(1-S_1)^2)}{(1-S_1)^2-S_0^2}.
\]
Thus, $\tau_0$ cannot be equal to zero (as otherwise $a_0=0$). Thus, while $b(i)/\zeta^i\to\infty$ as $i\to\infty$ for any $0<\zeta<1$ we do not have $\chi_{\delta}(u)\sim Cb(u)$, as $u\to\infty$, for some $0<C<\infty$.

\begin{remark} % Figures calculated/verified by Mathematica file entitled: limsARCH
%We provide a numerical illustration where all of the limits in Example~\ref{ex:delsimb} may be computed explicitly.
%%We remark that all of the limits in Example~\ref{ex:delsimb} can be computed numerically. To illustrate this we give a numerical example. 
%Define $\phi(n)=n^{-2}$ forall $n\geq1$ and $\phi(0)=2$  and
%\[
%	b(j) =
%	\begin{cases}
%		a_1j^{-2} , \quad j/2\in\N \\
%		a_0j^{-2} , \quad j/2\not\in\N
%	\end{cases}
%\]
%where $a_0:=0.5$ and $a_1:=0.25$, ie. both positive and distinct. Furthermore choose $\{\xi(n)\}_{n\in\Z}$ an i.i.d. non--negative stochastic process with mean equal to unity (ie. $\l=1$). Thus it is calculated that
%\begin{align*} % Numerics found via Foruier Series (tut4. Q1.) or Mathematica
%	S_0 = a_0\l\sum_{j=0}^{\infty}\frac{1}{(2j+1)^2} = \frac{\pi^2}{16}, \quad
%	S_1 = a_1\l\sum_{j=0}^{\infty}\frac{1}{2^{2}(j+1)^{2}} = \frac{\pi^2}{96}.
%\end{align*}
Following on from Remark~\ref{rk:delnum} one can compute the various limits and infinte sums in Example~\ref{ex:acfsimb}, ie. $\sum_{j=0}^{\infty}\delta(2j),\sum_{j=0}^{\infty}\delta(2j+1),\tau_0$ and $\tau_1$ respectively and hence we have
\begin{align*}
	\lim_{u\to\infty}\frac{\chi_{\delta}(2u)}{b(2u)} = \l(\frac{a_0}{a_1}\tau_0+\tau_1) = 67.9375\ldots, \qquad 
	\lim_{u\to\infty}\frac{\chi_{\delta}(2u+1)}{b(2u+1)} = \l(\frac{a_1}{a_0}\tau_0+\tau_1) = 34.1128\ldots
\end{align*}
Thus as both $\Lambda$ and $\tau_0$ are positive (approximately 5.55073 and 22.5498 respectively), we have that the above two limts are unequal and hence $\chi_{\delta}(u)\not\sim Cb(u)$ as $u\to\infty$ for some $0<C<\infty$. 
% Similarly, one observes that $\delta(u)\not\sim Cb(u)$ as $u\to\infty$ for some $0<C<\infty$.
\end{remark}

\providecommand{\bysame}{\leavevmode\hbox
to3em{\hrulefill}\thinspace}
\providecommand{\MR}{\relax\ifhmode\unskip\space\fi MR }
% \MRhref is called by the amsart/book/proc definition of \MR.
\providecommand{\MRhref}[2]{%
  \href{http://www.ams.org/mathscinet-getitem?mr=#1}{#2}
} \providecommand{\href}[2]{#2}

%\bibliographystyle{amsplain}
%\bibliography{subexp_aug2005}

\end{document}